\providecommand{\U}[1]{\protect\rule{.1in}{.1in}}
\newtheorem{theorem}{Theorem}
\newtheorem{lemma}[theorem]{Lemma}
\newtheorem{proposition}[theorem]{Proposition}
\newtheorem{remark}[theorem]{Remark}
\newenvironment{proof}[1][Proof]{\noindent\textbf{#1.} }{\ \rule{0.5em}{0.5em}}
\begin{document}

\title{Approximate transmission conditions\\for a Poisson problem at mid-diffusion}
\author{Khaled El-Ghaouti BOUTARENE\\{\small USTHB, Faculty of Mathematics, AMNEDP Laboratory,}\\{\small PO Box 32, El Alia 16111, Bab Ezzouar, Algiers , Algeria} \\{\small kboutarene@usthb.dz,} {\small boutarenekhaled@yahoo.fr}}
\maketitle

\begin{abstract}
This work consists in the asymptotic analysis of the solution of Poisson
equation in a bounded domain of $\mathbb{R}^{P}$ $(P=2,3)$ with a thin layer.
We use a method based on hierarchical variational equations to derive
asymptotic expansion of the solution with respect to the thickness of the thin
layer. We determine the first two terms of the expansion and prove the error
estimate made by truncating the expansion after a finite number of terms.
Next, using the first two terms of the asymptotic expansion, we show that we
can model the effect of the thin layer by a problem with transmission
conditions of order two.

\end{abstract}

\textbf{Keywords:} Asymptotic analysis; Asymptotic expansion; Approximate
transmission conditions; Thin layer; Poisson equation. \newline

\section{Introduction}

\label{}

This paper deals with the study of the asymptotic behavior of the solution of
Poisson equation in a bounded domain $\Omega$ of $\mathbb{R}^{P}$ ($P=2,3 $)
consisting of two sub-domains separated by a thin layer of thickness $\delta$
(destined to tend to 0). The mesh of these thin geometries presents numerical
instabilities that can severely damage the accuracy of the entire process of
resolution. To overcome this difficulty, we adopt asymptotic methods to model
the effect of the thin layer by problems with either appropriate boundary
conditions when we consider a domain surrounded by a thin layer (see for
instance
\cite{ammari-nedelec-1996,benlem96,benlem08,Durufle-Haddar-Joly-2006,engned})
or, as in this paper, with suitable transmission conditions on the interface
(see for instance
\cite{bouta1,delou,clair2.2,clair2.1,SchTor2010,SchTor-magne2010}). Although
this type of conditions has been widely studied, there is still a lot to be
understood concerning the effects of thin shell and their modelisation. Our
motivation comes from \cite{clair-phd,SchTor2010}, in which the authors have
worked on problems of electromagnetic and biological origins. We cite for
example that of Poignard \cite[Chapter 2]{clair-phd}. He considered a cell
immersed in an ambient medium and studied the electric field in the transverse
magnetic (TM) mode at mid-frequency and from which our problem was inspired.

Let us give now precise notations. Let $\Omega$ be a bounded domain of
$\mathbb{R}^{P}$ ($P=2,3$) consisting of three smooth sub-domains: an open
bounded subset $\Omega_{i,\delta}$ with regular boundary $\Gamma_{\delta,1}$,
an exterior domain $\Omega_{e,\delta}$ with disjoint regular boundaries
$\Gamma_{\delta,2}$ and $\partial\Omega$, and a membrane $\Omega_{\delta}$
(thin layer) of thickness $\delta$ separating $\Omega_{i,\delta}$ from
$\Omega_{e,\delta}\ $(see Fig. \ref{fig1}).
\begin{figure}[th]
\centering {\ \begin{minipage}[t]{6cm}
{\includegraphics[width=6cm]{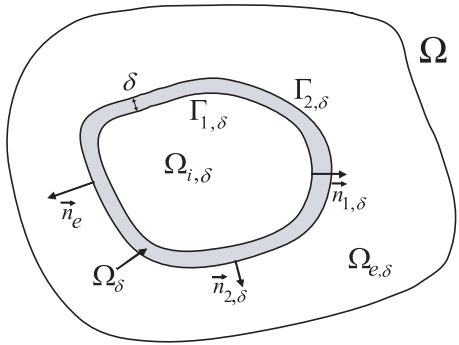}}
\caption{Geometric data}\label{fig1}
\end{minipage}
}\end{figure}

\bigskip

Define the piecewise regular function $\alpha$ by
\[
\alpha(x)=\left\{
\begin{array}
[c]{ll}%
\alpha_{e} & \text{if }x\in\Omega_{e,\delta},\\
\alpha_{\delta} & \text{if }x\in\Omega_{\delta},\\
\alpha_{i} & \text{if }x\in\Omega_{i,\delta},
\end{array}
\right.
\]
where $\alpha_{e},$ $\alpha_{\delta}$ and $\alpha_{i}$ are strictly positive
constants satisfying $\alpha_{i}<\alpha_{\delta}<\alpha_{e}$ or $\alpha
_{e}<\alpha_{\delta}<\alpha_{i}$ which correspond to the case of
mid-diffusion. For a given $f$ in $\mathcal{C}^{\infty}(\Omega),$ we are
interested in the unique solution $u_{\delta}:=(u_{i,\delta},u_{d,\delta
},u_{e,\delta})$ in $H^{1}(\Omega)$ of the following diffusion problem
\begin{subequations}
\label{1}%
\begin{equation}
\left\{
\begin{array}
[c]{ll}%
-div\left(  \alpha\nabla u_{\delta}\right)  =f & \text{in }\Omega,\\
u_{\delta|\partial\Omega}=0 & \text{on }\partial\Omega,
\end{array}
\right. \label{1.01}%
\end{equation}
with transmission conditions on the interfaces%
\begin{equation}
\left\{
\begin{array}
[c]{ll}%
u_{d,\delta|\Gamma_{\delta,2}}=u_{e,\delta|\Gamma_{\delta,2}} & \text{on
}\Gamma_{\delta,2},\\
\alpha_{\delta}\partial_{\mathbf{n}_{\delta,2}}u_{d,\delta|\Gamma_{\delta,2}%
}=\alpha_{e}\partial_{\mathbf{n}_{\delta,2}}u_{e,\delta|\Gamma_{\delta,2}} &
\text{on }\Gamma_{\delta,2},\\
u_{i,\delta|\Gamma_{\delta,1}}=u_{d,\delta|\Gamma_{\delta,1}} & \text{on
}\Gamma_{\delta,1},\\
\alpha_{i}\partial_{\mathbf{n}_{\delta,1}}u_{i,\delta|\Gamma_{\delta,1}%
}=\alpha_{\delta}\partial_{\mathbf{n}_{\delta,1}}u_{d,\delta|\Gamma_{\delta
,1}} & \text{on }\Gamma_{\delta,1},
\end{array}
\right. \label{1.02}%
\end{equation}
where $\partial_{\mathbf{n}_{e}},\partial_{\mathbf{n}_{\delta,2}}$ and
$\partial_{\mathbf{n}_{\delta,1}}$ denote the derivatives in the direction of
the unit normal vectors $\mathbf{n}_{e},\mathbf{n}_{\delta,2}$ and
$\mathbf{n}_{\delta,1}$ to $\partial\Omega,\Gamma_{\delta,2}$ and
$\Gamma_{\delta,1}$ respectively (see Fig. \ref{fig1}).

The main result of this paper is to approximate the solution $u_{\delta}$ of
Problem (\ref{1}) by a solution of a problem involving Poisson equation in
$\Omega$ with two sub-domains separated by an arbitrary interface $\Gamma$
between $\Gamma_{\delta,1}$ and $\Gamma_{\delta,2}$ (see Fig. \ref{fig2} and
Fig. \ref{fig3}), with transmission conditions of order two on $\Gamma$,
modeling the effect of the thin layer. However, it seems that the existence
and uniqueness of the solution of this problem are not obvious$.$ Therefore,
we rewrite the problem into a pseudodifferential equation (cf. \cite{bonnvial}%
) and show that in the case of mid-diffusion, we can find the appropriate
position of the surface $\Gamma$ to solve this equation. The cases 3D and 2D
are similar. We treat the three-dimensional case and the two dimensional one
comes as a remark.

The present paper is organized as follows. In Section 2, we give the statement
of the model problem considered. In section 3, we collect basic results of
differential geometry of surfaces. Sections 4 and 5 are devoted to the
asymptotic analysis of our problem. We present, in section 4, hierarchical
variational equations suited to the construction of a formal asymptotic
expansion up to any order, while Section 5 focuses on the convergence of this
ansatz. With the help of the asymptotic expansion of the solution $u_{\delta}%
$, we model, in the last section, the effect of the thin layer by a problem
with appropriate transmission conditions.

\section{Problem setting}

\begin{figure}[ptbh]
\begin{minipage}[c]{.46\linewidth}
\begin{center}
\includegraphics[width=6cm]{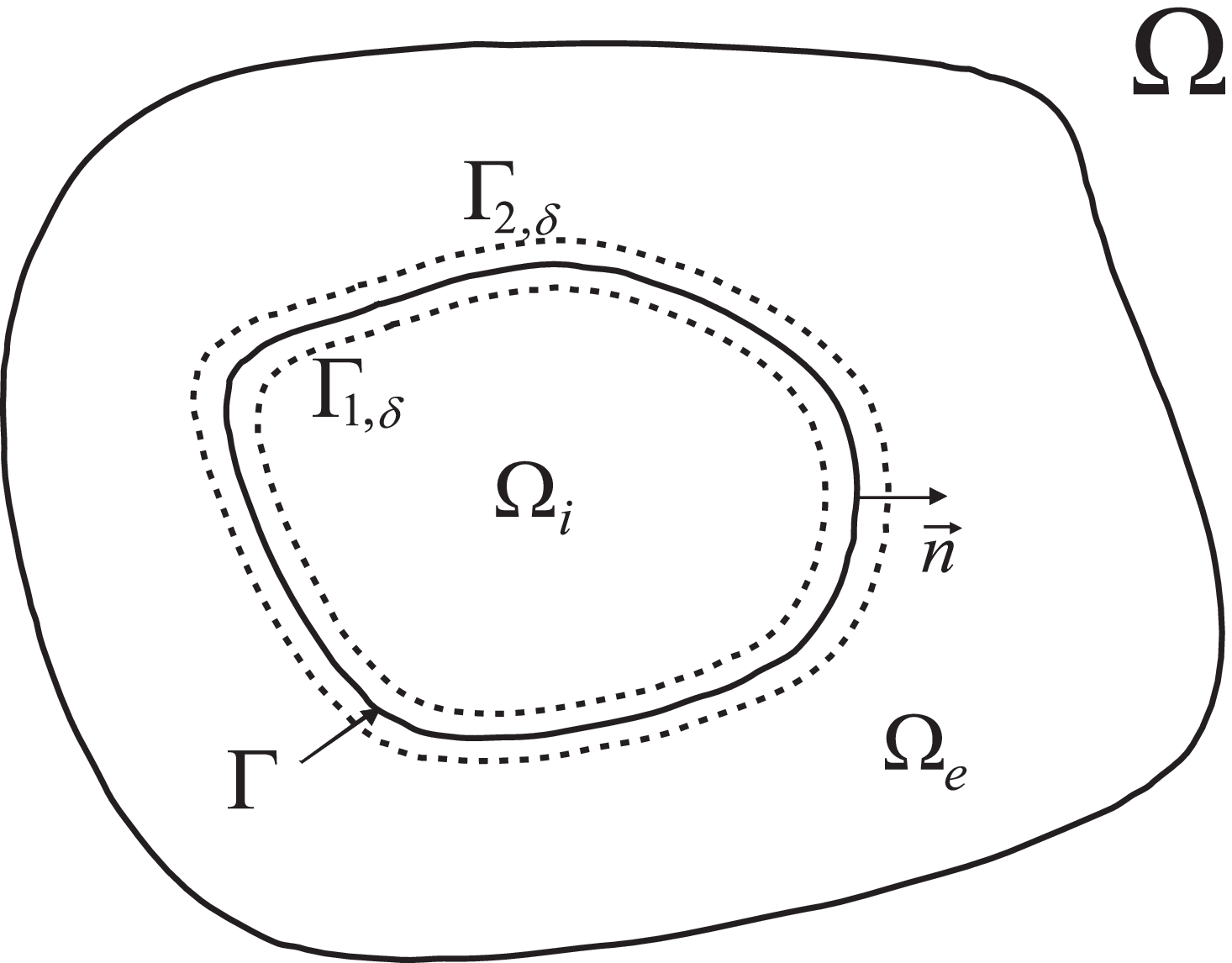}
\caption{Geometry of the studied problem}
\label{fig2}
\end{center}
\end{minipage}
\hfill\begin{minipage}[c]{.46\linewidth}
\begin{center}
\includegraphics[width=3.5cm]{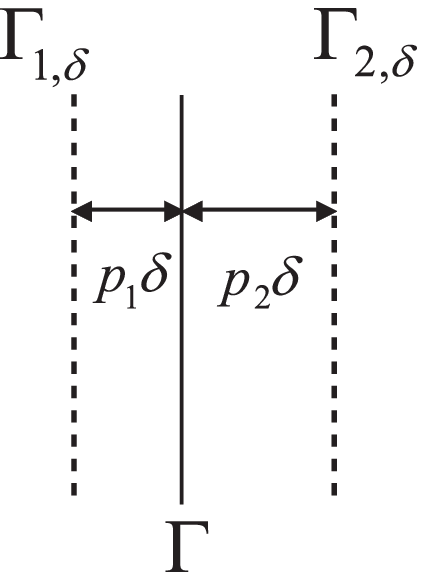}
\caption{The thin layer $\Omega _{\delta }$}
\label{fig3}
\end{center}
\end{minipage}
\end{figure}
We consider a parallel surface $\Gamma$ to $\Gamma_{\delta,1}$ and
$\Gamma_{\delta,2}$ dividing $\Omega_{\delta}$ into two thin layers
$\Omega_{\delta,1}$ and $\Omega_{\delta,2}$ of thickness respectively
$p_{1}\delta$ and $p_{2}\delta,$ where $p_{1}$ and $p_{2}$ are nonnegative
real numbers satisfying $p_{1}+p_{2}=1$ and such that $p_{1}$ and $p_{2}$
belong to a small neighborhood of $1/2$ (see Fig. \ref{fig2} and Fig.
\ref{fig3}). The term \textit{small} neighborhood means that the constants
$p_{1} $ and $p_{2}$ are not too close to $1$ or $0$, in order to avoid having
a layer too thin compared to the other because the following analysis does not
lend itself to this case. Under the aforementioned assumptions, we investigate
in $H^{1}(\Omega)$ the solution $u_{\delta}:=(u_{i,\delta},u_{d_{1},\delta
},u_{d_{2},\delta},u_{e,\delta})$ of the following problem
\end{subequations}
\begin{subequations}
\label{1.1}%
\begin{align}
-div\left(  \alpha\nabla u_{\delta}\right)   &  =f\hspace{3mm}\text{ in
}\Omega,\label{1.1.1}\\
u_{\delta|\partial\Omega} &  =0\hspace{3mm}\text{ on }\partial\Omega
,\label{1.1.2}%
\end{align}
with transmission conditions%
\begin{eqnarray}
u_{d_{2},\delta|\Gamma_{\delta,2}} &  =u_{e,\delta|\Gamma
_{\delta,2}}\ \ \ \ \ \ \ \ \ \ \ \ \ \ \ \text{on }\Gamma_{\delta
,2},\label{1.1.6}\\
\alpha_{\delta}\partial_{\mathbf{n}_{\delta,2}}u_{d_{2},\delta|\Gamma
_{\delta,2}} &  =\alpha_{e}\partial_{\mathbf{n}_{\delta,2}}u_{e,\delta
|\Gamma_{\delta,2}}\hspace{3mm}\ \ \ \ \text{on }\Gamma_{\delta,2}%
,\label{1.1.7}\\
u_{d_{1},\delta|\Gamma} &  =u_{d_{2},\delta|\Gamma}\hspace{3mm}%
\ \ \ \ \ \ \ \ \ \ \ \ \ \ \text{on }\Gamma,\label{1.1.8}\\
\alpha_{\delta}\partial_{\mathbf{n}}u_{d_{1},\delta|\Gamma} &  =\alpha
_{\delta}\partial_{\mathbf{n}}u_{d_{2},\delta|\Gamma}\ \ \ \ \ \ \ \ \hspace
{3mm}\text{on }\Gamma,\label{1.1.9}\\
u_{i,\delta|\Gamma_{\delta,1}} &  =u_{d_{1},\delta|\Gamma_{\delta,1}}%
\hspace{0.06in}\ \ \ \ \ \ \ \ \ \ \ \ \ \text{on }\Gamma_{\delta
,1},\label{1.1.10}\\
\alpha_{i}\partial_{\mathbf{n}_{\delta,1}}u_{i,\delta|\Gamma_{\delta,1}} &
=\alpha_{\delta}\partial_{\mathbf{n}_{\delta,1}}u_{d_{1},\delta|\Gamma
_{\delta,1}}\ \ \ \ \ \text{on }\Gamma_{\delta,1},\label{1.1.11}%
\end{eqnarray}
where $\partial_{\mathbf{n}}$ denotes the derivative in the direction of the
unit normal vector $\mathbf{n}$ to $\Gamma$ (outer for $\Omega_{\delta,1}$ and
inner for $\Omega_{\delta,2}$).

\section{Notations and definitions}

The goal of this section is to define and to collect the main features of
differential geometry \cite{docarmo} (see also \cite{nedelec}) in order to
formulate our problem in a fixed domain (independent of $\delta$) which is a
key tool to determine the asymptotic expansion of the solution $u_{\delta}$.

In the sequel, Greek indice $\beta$ takes the values 1 and 2. Let
$I_{\delta,1}=\left(  -\delta,0\right)  $ and $I_{\delta,2}=\left(
0,\delta\right)  .$ We parameterize the thin shell $\Omega_{\delta,\beta}$ by
the manifold $\Gamma\times I_{\delta,\beta}$ through the mapping $\psi_{\beta
}$ defined by
\end{subequations}
\[
\left\{
\begin{array}
[c]{rcl}%
\Gamma\times I_{\delta,\beta} & \overset{\psi_{\beta}}{\rightarrow} &
\Omega_{\delta,\beta}\\
(m,\eta_{\beta}) & \rightarrow & x:=m+p_{\beta}\eta_{\beta}\mathbf{n}(m).
\end{array}
\right.
\]
As well-known \cite{docarmo}, if the thickness of $\Omega_{\delta,\beta}$ is
small enough, $\psi_{\beta}$ is a $C^{\infty}$-diffeomorphism of manifolds and
it is also known \cite[Remark 2.1]{clair2.2} that the normal vector
$\mathbf{n}_{\delta,\beta}$ to $\Gamma_{\delta,\beta}$ can be identified to
$\mathbf{n}$. To each function $v_{\beta}$ defined on $\Omega_{\delta,\beta}$,
we associate the function $\widetilde{v}_{\beta}$ defined on $\Gamma\times
I_{\delta,\beta}$ by%
\[
\left\{
\begin{array}
[c]{rl}%
\widetilde{v}_{\beta}(m,\eta_{\beta}) & :=v_{\beta}(x),\\
x & =\psi_{\beta}\left(  m,\eta_{\beta}\right),
\end{array}
\right.
\]
then, we have
\[
\nabla v_{\beta}=\left(  I+p_{\beta}\eta_{\beta}\mathcal{R}\right)
^{-1}\nabla_{\Gamma}\widetilde{v}_{\beta}+p_{\beta}^{-1}\frac{\partial
\widetilde{v}_{\beta}}{\partial\eta_{\beta}}\mathbf{n},
\]
where $\nabla_{\Gamma}\widetilde{v}_{\beta}(m)$ and $\mathcal{R}$ are
respectively the surfacic gradient of $\widetilde{v}$ at $m\in\Gamma$ and the
curvature operator $\mathcal{R}$ of $\Gamma$ at point $m.$ The volume element
on the thin shell $\Omega_{\delta,\beta}$ is given by%
\[
d\Omega_{\delta,\beta}=p_{\beta}\det\left(  I+p_{\beta}\eta_{\beta}%
\mathcal{R}\right)  \ d\Gamma d\eta_{\beta}.
\]
Now, we introduce the scaling $s_{\beta}=\eta_{\beta}/\delta,$ and the
intervals $I_{1}=\left(  -1,0\right)  $ and $I_{2}=\left(  0,1\right)  $ such
that the $C^{\infty}$-diffeomorphism $\Phi_{\beta},$ defined by
\[
\left\{
\begin{array}
[c]{rcl}%
\Omega^{\beta}:=\Gamma\times I_{\beta} & \overset{\Phi_{\beta}}{\rightarrow} &
\Omega_{\delta,\beta}\\
(m,s_{\beta}) & \rightarrow & x:=m+\delta p_{\beta}s_{\beta}\mathbf{n}(m),
\end{array}
\right.
\]
parameterizes the thin shell $\Omega_{\delta,\beta}.$ To any function
$v_{\beta}$ defined on $\Omega_{\delta,\beta}$, we associate the function
$v^{[\beta]}$ defined on $\Omega^{\beta}$ through%
\[
\left\{
\begin{array}
[c]{rl}%
v^{[\beta]}(m,s_{\beta}) & :=v_{\beta}(x),\\
x & =\Phi_{\beta}(m,s_{\beta}),
\end{array}
\right.
\]
then the gradient takes the form%
\begin{equation}
\nabla v_{\beta}=\left(  I+\delta p_{\beta}s_{\beta}\mathcal{R}\right)
^{-1}\nabla_{\Gamma}v^{[\beta]}+p_{\beta}^{-1}\delta^{-1}\frac{\partial
v^{[\beta]}}{\partial s_{\beta}}\mathbf{n}.\label{2}%
\end{equation}
The volume element on the thin shell $\Omega_{\delta,\beta}$ becomes
\begin{equation}
d\Omega_{\delta,\beta}=p_{\beta}\delta\det J_{\delta,\beta}\ d\Gamma
ds_{\beta},\label{3}%
\end{equation}
where%
\[
J_{\delta,\beta}:=I+p_{\beta}\delta s_{\beta}\mathcal{R}.
\]
Let $u_{\beta}$ and $v_{\beta}$ be two regular functions defined on
$\Omega_{\delta,\beta}$. From (\ref{2}) and (\ref{3}), we get the change of
variables formula
\begin{align}
\int_{\Omega_{\delta,\beta}}\nabla u_{\beta}.\nabla v_{\beta}\ d\Omega
_{\delta,\beta}  &  =p_{\beta}\delta\int_{\Omega^{\beta}}J_{\delta,\beta}%
^{-2}\nabla_{\Gamma}u^{[\beta]}.\nabla_{\Gamma}v^{[\beta]}\det J_{\delta
,\beta}\ d\Gamma ds_{\beta}\nonumber\\
&  +p_{\beta}^{-1}\delta^{-1}\int_{\Omega^{\beta}}\partial_{s_{\beta}%
}u^{[\beta]}\partial_{s_{\beta}}v^{[\beta]}\det J_{\delta,\beta}\ d\Gamma
ds_{\beta}.\label{4}%
\end{align}

\begin{remark}
In the two-dimensional case, if $m\in\Gamma$, we parameterize the curve
$\Gamma$ by $m(t)$ where $t\in(0,l_{\Gamma})$ is the curvilinear abscissa and
$l_{\Gamma}$ is the length of the curve $\Gamma$, then formula (\ref{4}) turns
into%
\begin{align}
\int_{\Omega_{\delta,\beta}}\nabla u_{\beta}.\nabla v_{\beta}\ d\Omega
_{\delta,\beta}  &  =p_{\beta}^{-1}\delta^{-1}\int_{\Omega^{\beta}}\left(
1+p_{\beta}\delta s_{\beta}\mathcal{R}\right)  \partial_{s_{\beta}}u^{[\beta
]}\partial_{s_{\beta}}v^{[\beta]}\ d\Gamma ds_{\beta}\nonumber\\
&  +p_{\beta}\delta\int_{\Omega^{\beta}}\left(  1+p_{\beta}\delta s_{\beta
}\mathcal{R}\right)  ^{-1}\partial_{t}u^{[\beta]}\partial_{t}v^{[\beta
]}\ d\Gamma ds_{\beta}.\label{6}%
\end{align}

\end{remark}

\begin{remark}
For any function $u$ defined in a neighborhood of $\Gamma,$ we denote, for
convenience, by $u_{|\Gamma}$ the trace of $u$ on $\Gamma$ indifferently in
local coordinates or in Cartesian coordinates.
\end{remark}

\section{The asymptotic analysis}

This section is devoted to the asymptotic analysis of the solution of Problem
(\ref{1.1}). We show that this latter is equivalent to a variational equation
from which we derive the asymptotic expansion of $u_{\delta}.$ We give a
hierarchy of variational equations needed to determine the terms of the
expansion and we calculate the first two terms of the expansion.

Let $v_{d}$ be in $H^{1}(\Omega_{\delta}).$ We denote by $v_{d_{\beta}}$ its
restriction to $\Omega_{\delta,\beta}$. Multiplying Equation
\begin{subequations}
\[
-div\left(  \alpha_{\delta}\nabla u_{d,\delta}\right)  =f_{|\Omega_{\delta}%
}\hspace{3mm}\text{ in }\Omega_{\delta},
\]
by test functions $v_{d}$, using (\ref{1.1.7}), (\ref{1.1.9}), (\ref{1.1.11})
and Green's formula, we get
\end{subequations}
\begin{gather*}
\left\langle \alpha_{i}\partial_{\mathbf{n}_{\delta,1}}u_{i,\delta
|\Gamma_{\delta,1}},v_{d_{1}}\right\rangle _{H^{-1/2}(\Gamma_{\delta,1})\times
H^{1/2}(\Gamma_{\delta,1})}+\alpha_{\delta}\int_{\Omega_{\delta}}\nabla
u_{d,\delta}.\nabla v_{d}\ d\Omega_{\delta}\\
-\left\langle \alpha_{e}\partial_{\mathbf{n}_{\delta,2}}u_{e,\delta
|\Gamma_{\delta,2}},v_{d_{2}}\right\rangle _{H^{-1/2}(\Gamma_{\delta,2})\times
H^{1/2}(\Gamma_{\delta,2})}=\int_{\Omega_{\delta}}f_{|\Omega_{\delta}}%
v_{d}\ d\Omega_{\delta},
\end{gather*}
in which $\left\langle .,.\right\rangle _{H^{-1/2}(\Gamma_{\delta,\beta
})\times H^{1/2}(\Gamma_{\delta,\beta})}$ denotes the duality pairing between
$H^{-1/2}(\Gamma_{\delta,\beta})$ and $H^{1/2}(\Gamma_{\delta,\beta})$. We use
the dilation in the thin layer and Formula (\ref{4}), to obtain%
\begin{gather}
\left\langle \alpha_{i}\partial_{\mathbf{n}_{\delta,1}}u_{i,\delta
|\Gamma_{\delta,1}}\circ\Phi_{1}(m,-1),v^{\left[  1\right]  }%
(m,-1)\right\rangle _{H^{-1/2}(\Gamma\times\left\{  -1\right\}  )\times
H^{1/2}(\Gamma\times\left\{  -1\right\}  )}\nonumber\\
-\left\langle \alpha_{e}\partial_{\mathbf{n}_{\delta,2}}u_{e,\delta
|\Gamma_{\delta,2}}\circ\Phi_{2}(m,1),v^{\left[  2\right]  }(m,1)\right\rangle
_{H^{-1/2}(\Gamma\times\left\{  1\right\}  )\times H^{1/2}(\Gamma
\times\left\{  1\right\}  )}\nonumber\\
+\displaystyle\sum_{\beta=1}^{2}\left[  \alpha_{\delta}\delta a_{\delta
}^{\left[  \beta\right]  }\left(  u_{d,\delta}^{\left[  \beta\right]
},v^{\left[  \beta\right]  }\right)  \right]  =\int_{\Omega_{\delta}%
}f_{|\Omega_{\delta}}v_{d}\ d\Omega_{\delta},\label{12}%
\end{gather}
which is the starting point for the asymptotic analysis, where the bilinear
form $a^{\left[  \beta\right]  }\left(  .,.\right)  $ is defined by%
\begin{align}
a_{\delta}^{\left[  \beta\right]  }\left(  u^{[\beta]},v^{[\beta]}\right)   &
:=p_{\beta}\int_{\Omega^{\beta}}J_{\delta,\beta}^{-2}\nabla_{\Gamma}%
u^{[\beta]}.\nabla_{\Gamma}v^{[\beta]}\det J_{\delta,\beta}\ d\Gamma
ds_{\beta}\nonumber\\
&  +p_{\beta}^{-1}\delta^{-2}\int_{\Omega^{\beta}}\partial_{s_{\beta}%
}u^{[\beta]}\partial_{s_{\beta}}v^{[\beta]}\det J_{\delta,\beta}\ d\Gamma
ds_{\beta},\label{10}%
\end{align}
for every $u^{[\beta]}$ and $v^{[\beta]}$ in $H^{1}(\Omega^{\beta}).$

\subsection{Hierarchy of the variational equations}

In the spirit of \cite{clair2.2,SchTor2010}, we will consider two asymptotic
expansions. Exterior expansions corresponding to the asymptotic expansion of
$u_{\delta}$ restricted to $\Omega_{e,\delta}$ and to $\Omega_{i,\delta} $ and
characterized by the ansatz%
\begin{align}
u_{e,\delta} &  =u_{e,0}+\delta u_{e,1}+\cdots,\label{13}\\
u_{i,\delta} &  =u_{i,0}+\delta u_{i,1}+\cdots,\label{14}%
\end{align}
\ where the terms $u_{e,n}$ and $u_{i,n}$ $(n\in\mathbb{N})$ are independent
of $\delta$ and defined on $\Omega_{e}:=\Omega_{e,\delta}\cup\Gamma_{\delta
,2}\cup\Omega_{\delta,2},$ and on $\Omega_{i}:=\Omega_{i,\delta}\cup
\Gamma_{\delta,1}\cup\Omega_{\delta,1}$ which are respectively the limits of
$\Omega_{e,\delta}$ and $\Omega_{i,\delta}$ for $\delta\rightarrow0.$ They
fulfill%
\begin{equation}
\left\{
\begin{array}
[c]{ll}%
-div\left(  \alpha_{i}\nabla u_{i,n}\right)  =\delta_{0,n}f_{|\Omega_{i}} &
\text{in }\Omega_{i}\text{,}\\
-div\left(  \alpha_{e}\nabla u_{e,n}\right)  =\delta_{0,n}f_{|\Omega_{e}} &
\text{in }\Omega_{e}\text{,}\\
u_{e,n|\partial\Omega}=0 & \text{on}\ \partial\Omega,
\end{array}
\right.  \label{15}%
\end{equation}
where $\delta_{0,n}$ indicates the Kronecker symbol, and an interior expansion corresponding to the asymptotic expansion of
$u_{d_{\beta},\delta}$ written in a fixed domain and defined by the ansatz
\begin{equation}
u_{d,\delta}^{\left[  \beta\right]  }=u_{0}^{\left[  \beta\right]  }+\delta
u_{1}^{\left[  \beta\right]  }+\cdots,\text{ in }\Omega^{\beta},\label{16}%
\end{equation}
where the terms $u_{n}^{\left[  \beta\right]  },\ n\in\mathbb{N}$, are
independent of $\delta$. Using a Taylor expansion in the normal variable, we
infer formally%
\begin{align}
u_{i,\delta}\circ\Phi_{1}(m,s_{1}) &  =u_{i,0|\Gamma}+\delta(u_{i,1|\Gamma
}+s_{1}p_{1}\partial_{\mathbf{n}}u_{i,0|\Gamma})\nonumber\\
&  +\delta^{2}(u_{i,2|\Gamma}+s_{1}p_{1}\partial_{\mathbf{n}}u_{i,1|\Gamma
}+\frac{s_{1}^{2}}{2}p_{1}^{2}\partial_{\mathbf{n}}^{2}u_{i,0|\Gamma}%
)+\cdots,\nonumber\\
&  :=U_{i,0}+\delta U_{i,1}+\delta^{2}U_{i,2}+\cdots,\label{16.1}\\
u_{e,\delta}\circ\Phi_{2}(m,s_{2}) &  =u_{e,0|\Gamma}+\delta(u_{e,1|\Gamma
}+s_{2}p_{2}\partial_{\mathbf{n}}u_{e,0|\Gamma})\nonumber\\
&  +\delta^{2}(u_{e,2|\Gamma}+s_{2}p_{2}\partial_{\mathbf{n}}u_{e,1|\Gamma
}+\frac{s_{2}^{2}}{2}p_{2}^{2}\partial_{\mathbf{n}}^{2}u_{e,0|\Gamma}%
)+\cdots,\nonumber\\
&  :=U_{e,0}+\delta U_{e,1}+\delta^{2}U_{e,2}+\cdots.\label{16.2}%
\end{align}
and Transmission Conditions (\ref{1.1.6}) and (\ref{1.1.10}) become%
\begin{align}
u_{e,0|\Gamma}+\delta(u_{e,1|\Gamma}+p_{2}\partial_{\mathbf{n}}u_{e,0|\Gamma
})+\cdots & =u_{0|s_{2}=1}^{\left[  2\right]  }+\delta u_{1|s_{2}=1}^{\left[
2\right]  }+\cdots,\label{18}\\
u_{i,0|\Gamma}+\delta(u_{i,1|\Gamma}-p_{1}\partial_{\mathbf{n}}u_{i,0|\Gamma
})+\cdots & =u_{0|s_{1}=-1}^{\left[  1\right]  }+\delta u_{1|s_{1}%
=-1}^{\left[  1\right]  }+\cdots.\label{17}%
\end{align}
As
\[
-div\left(  \alpha_{e}\nabla\left(  \sum\limits_{n\geq0}\delta^{n}%
u_{e,n}\right)  \right)  =f_{|\Omega_{\delta,2}},
\]
thanks to Green's formula, we get
\begin{gather*}
\left\langle \alpha_{e}\partial_{\mathbf{n}}\left(  \sum_{n\geq0}\delta
^{n}u_{e,n|\Gamma}\right)  ,v_{d_{2}|\Gamma}\right\rangle _{H^{-1/2}%
(\Gamma)\times H^{1/2}(\Gamma)}\\
-\left\langle \alpha_{e}\partial_{\mathbf{n}_{\delta,2}}\left(  \sum_{n\geq
0}\delta^{n}u_{e,n|\Gamma_{\delta,2}}\right)  ,v_{d_{2}|\Gamma_{\delta,2}%
}\right\rangle _{H^{-1/2}(\Gamma_{\delta,1})\times H^{1/2}(\Gamma_{\delta,1}%
)}\\
+\alpha_{e}\int_{\Omega_{\delta,2}}\nabla\left(  \sum_{n\geq0}\delta
^{n}u_{e,n}\right)  .\nabla v_{d_{2}}\ d\Omega_{\delta,2}=\int_{\Omega
_{\delta,2}}f_{|\Omega_{\delta,2}}v_{d_{2}}\ d\Omega_{\delta,2}.
\end{gather*}
Using the the scaling $s_{2}=\eta_{2}/\delta,$ we obtain
\begin{gather}
\int_{\Gamma}\alpha_{e}\partial_{\mathbf{n}}\left(  \sum_{n\geq0}\delta
^{n}u_{e,n|\Gamma}\right)  v^{\left[  2\right]  }(m,0)\ d\Gamma+\alpha
_{e}\delta a_{\delta}^{\left[  2\right]  }\left(  \sum_{n\geq0}\delta
^{n}U_{e,n},v^{\left[  2\right]  }\right) \nonumber\\
-\left\langle \alpha_{e}\partial_{\mathbf{n}_{\delta,2}}\left(  \sum_{n\geq
0}\delta^{n}u_{e,n|\Gamma_{\delta,2}}\right)  \circ\Phi_{2}(m,1),v^{\left[
2\right]  }(m,1)\right\rangle _{H^{-1/2}(\Gamma\times\left\{  1\right\}
)\times H^{1/2}(\Gamma\times\left\{  1\right\}  )}\nonumber\\
=\int_{\Omega_{\delta,2}}f_{|\Omega_{\delta,2}}v_{d_{2}}\ d\Omega_{\delta
,2}.\label{18.1}%
\end{gather}
In the same way, we obtain the equation for $\alpha_{i}\partial_{\mathbf{n}%
_{\delta,1}}\left(  \sum_{n\geq0}\delta^{n}u_{i,n|\Gamma_{\delta,1}}\right)
\circ\Phi_{1}(m,-1)$:
\begin{gather}
\alpha_{i}\delta a_{\delta}^{\left[  1\right]  }\left(  \sum_{n\geq0}%
\delta^{n}U_{i,n},v^{\left[  1\right]  }\right)  -\int_{\Gamma}\alpha
_{i}\partial_{\mathbf{n}}\left(  \sum_{n\geq0}\delta^{n}u_{i,n|\Gamma}\right)
v^{\left[  1\right]  }(m,0)\ d\Gamma\nonumber\\
+\left\langle \alpha_{i}\partial_{\mathbf{n}_{\delta,1}}\left(  \sum_{n\geq
0}\delta^{n}u_{i,n|\Gamma_{\delta,1}}\right)  \circ\Phi_{1}(m,-1),v^{\left[
1\right]  }(m,-1)\right\rangle _{H^{-1/2}(\Gamma\times\left\{  -1\right\}
)\times H^{1/2}(\Gamma\times\left\{  -1\right\}  )}\nonumber\\
=\int_{\Omega_{\delta,1}}f_{|\Omega_{\delta,1}}v_{d_{1}}\ d\Omega_{\delta
,1}.\label{18.2}%
\end{gather}
Inserting expansions (\ref{13}), (\ref{14}) and (\ref{16}) in (\ref{12}),
using (\ref{16.1})-(\ref{16.2}) and (\ref{18.1})-(\ref{18.2}), we get%
\begin{gather}
\int_{\Gamma}\alpha_{i}\partial_{\mathbf{n}}\left(  \sum_{n\geq0}\delta
^{n}u_{i,n|\Gamma}\right)  v^{\left[  1\right]  }(m,0)\ d\Gamma-\alpha
_{i}\delta a_{\delta}^{\left[  1\right]  }\left(  \sum_{n\geq0}\delta
^{n}U_{i,n},v^{\left[  1\right]  }\right) \nonumber\\
+\displaystyle\sum_{\beta=1}^{2}\left[  \alpha_{\delta}\delta a_{\delta
}^{\left[  \beta\right]  }\left(  \sum_{n\geq0}\delta^{n}u_{n}^{\left[
\beta\right]  },v^{\left[  \beta\right]  }\right)  \right]  -\alpha_{e}\delta
a_{\delta}^{\left[  2\right]  }\left(  \sum_{n\geq0}\delta^{n}U_{e,n}%
,v^{\left[  2\right]  }\right) \nonumber\\
-\int_{\Gamma}\alpha_{e}\partial_{\mathbf{n}}\left(  \sum_{n\geq0}\delta
^{n}u_{e,n|\Gamma}\right)  v^{\left[  2\right]  }(m,0)\ d\Gamma=0.\label{18.3}%
\end{gather}
Now, we use the identity (see \cite[p. 1680]{benlem96})%
\begin{align*}
J_{\delta,\beta}^{-2} &  :=I-2s_{\beta}p_{\beta}\delta\mathcal{R}+3\left(
p_{\beta}s_{\beta}\delta\mathcal{R}\right)  ^{2}+\cdots+n\left(  -p_{\beta
}s_{\beta}\delta\mathcal{R}\right)  ^{n-1}\\
&  +\left(  -s_{\beta}p_{\beta}\delta\mathcal{R}\right)  ^{n}\left[
nJ_{\delta,\beta}^{-1}+J_{\delta,\beta}^{-2}\right]  .
\end{align*}
Since%
\[
\det J_{\delta,\beta}=1+2p_{\beta}s_{\beta}\delta\mathcal{H}+\left(  p_{\beta
}s_{\beta}\delta\right)  ^{2}\mathcal{K},
\]
where $2\mathcal{H}:=tr\mathcal{R}$ and $\mathcal{K}:=\det\mathcal{R}$ are
respectively the mean and the Gaussian curvatures of the surface $\Gamma$, the
bilinear form $a_{\delta}^{\left[  \beta\right]  }(.,.)$ admits the expansion
\begin{align}
a_{\delta}^{\left[  \beta\right]  }\left(  .,.\right)   &  =\delta^{-2}%
a_{0,2}^{\left[  \beta\right]  }+\delta^{-1}a_{1,2}^{\left[  \beta\right]
}+\left(  a_{2,2}^{\left[  \beta\right]  }+a_{0,1}^{\left[  \beta\right]
}\right)  +\delta a_{1,1}^{\left[  \beta\right]  }+\cdots\nonumber\\
&  +\delta^{n-1}a_{n-1,1}^{\left[  \beta\right]  }+\delta^{n}r_{n}^{\left[
\beta\right]  }\left(  \delta;.,.\right)  ,\label{21}%
\end{align}
where the forms $a_{k,l}^{\left[  \beta\right]  }$ are independent of $\delta$
and are given by
\begin{align*}
a_{0,2}^{\left[  \beta\right]  }\left(  u^{\left[  \beta\right]  },v^{\left[
\beta\right]  }\right)   &  :=\int_{\Omega^{\beta}}p_{\beta}^{-1}%
\partial_{s_{\beta}}u^{\left[  \beta\right]  }\partial_{s_{\beta}}v^{\left[
\beta\right]  }\ d\Gamma ds_{\beta},\\
a_{1,2}^{\left[  \beta\right]  }\left(  u^{\left[  \beta\right]  },v^{\left[
\beta\right]  }\right)   &  :=\int_{\Omega^{\beta}}2\mathcal{H}s_{\beta
}\partial_{s_{\beta}}u^{\left[  \beta\right]  }\partial_{s_{\beta}}v^{\left[
\beta\right]  }\ d\Gamma ds_{\beta},\\
a_{2,2}^{\left[  \beta\right]  }\left(  u^{\left[  \beta\right]  },v^{\left[
\beta\right]  }\right)   &  :=\int_{\Omega^{\beta}}p_{\beta}\mathcal{K}%
s_{\beta}^{2}\partial_{s_{\beta}}u^{\left[  \beta\right]  }\partial_{s_{\beta
}}v^{\left[  \beta\right]  }\ d\Gamma ds_{\beta},\\
a_{0,1}^{\left[  \beta\right]  }\left(  u^{\left[  \beta\right]  },v^{\left[
\beta\right]  }\right)   &  :=\int_{\Omega^{\beta}}p_{\beta}\nabla_{\Gamma
}u^{\left[  \beta\right]  }.\nabla_{\Gamma}v^{\left[  \beta\right]  }\ d\Gamma
ds_{\beta},\\
a_{1,1}^{\left[  \beta\right]  }\left(  u^{\left[  \beta\right]  },v^{\left[
\beta\right]  }\right)   &  :=\int_{\Omega^{\beta}}2p_{\beta}^{2}s_{\beta
}\left(  \mathcal{H}I-\mathcal{R}\right)  \nabla_{\Gamma}u^{\left[
\beta\right]  }.\nabla_{\Gamma}v^{\left[  \beta\right]  }\ d\Gamma ds_{\beta
},\\
a_{2,1}^{\left[  \beta\right]  }\left(  u^{\left[  \beta\right]  },v^{\left[
\beta\right]  }\right)   &  :=\int_{\Omega^{\beta}}p_{\beta}^{3}\left(
\mathcal{K}I-4\mathcal{HR}+3\mathcal{R}^{2}\right)  s_{\beta}^{2}%
\nabla_{\Gamma}u^{\left[  \beta\right]  }.\nabla_{\Gamma}v^{\left[
\beta\right]  }\ d\Gamma ds_{\beta},\\
a_{n-1,1}^{\left[  \beta\right]  }\left(  u^{\left[  \beta\right]
},v^{\left[  \beta\right]  }\right)   &  :=\int_{\Omega^{\beta}}p_{\beta}%
^{n}\left[  \left(  n-2\right)  \mathcal{KR}^{n-3}-\left(  n-1\right)
2\mathcal{HR}^{n-2}\right. \\
&  \left.  +n\mathcal{R}^{n-1}\right]  \left(  -s_{\beta}\right)  ^{n-1}%
\nabla_{\Gamma}u^{\left[  \beta\right]  }.\nabla_{\Gamma}v^{\left[
\beta\right]  }\ d\Gamma ds_{\beta}.
\end{align*}
The form $r_{n}^{\left[  \beta\right]  }(\delta;.,.)$ is the remainder of
Expansion (\ref{21}) and is expressed by
\[
r_{n}^{\left[  \beta\right]  }(\delta;u^{\left[  \beta\right]  },v^{\left[
\beta\right]  }):=\int_{\Omega^{\beta}}\left(  B_{n,\delta}+2\mathcal{H}%
B_{n-1,\delta}+\mathcal{K}B_{n-2,\delta}\right)  s_{\beta}^{n}\nabla_{\Gamma
}u^{\left[  \beta\right]  }.\nabla_{\Gamma}v^{\left[  \beta\right]  }d\Gamma
ds_{\beta},
\]
with%
\[
B_{n,\delta}:=\left\{
\begin{array}
[c]{l}%
\left(  -\mathcal{R}\right)  ^{n}\left(  nJ_{\delta,\beta}^{-1}+J_{\delta
,\beta}^{-2}\right)  \text{ if}\ n\geq0,\\
J_{\delta,\beta}^{-2}\text{ otherwise.}%
\end{array}
\right.
\]

\begin{remark}
In the two-dimensional case, with the help of (\ref{6}), Expansion (\ref{21})
turns into
\[
a_{\delta}^{\left[  \beta\right]  }\left(  .,.\right)  =\delta^{-2}%
a_{0,2}^{\left[  \beta\right]  }+\delta^{-1}a_{1,2}^{\left[  \beta\right]
}+a_{0,1}^{\left[  \beta\right]  }+\delta a_{1,1}^{\left[  \beta\right]
}+\cdots+\delta^{n-1}a_{n-1,1}^{\left[  \beta\right]  }+\delta^{n}%
r_{n}^{\left[  \beta\right]  }\left(  \delta;.,.\right)  ,
\]
with%
\begin{gather*}
a_{n,2}^{\left[  \beta\right]  }\left(  u^{\left[  \beta\right]  },v^{\left[
\beta\right]  }\right)  :=\int_{\Omega^{\beta}}p_{\beta}^{n-1}\left(
s_{\beta}\mathcal{R}\right)  ^{n}\partial_{s_{\beta}}u^{\left[  \beta\right]
}\partial_{s_{\beta}}v^{\left[  \beta\right]  }\ d\Gamma ds_{\beta},\\
a_{n,1}^{\left[  \beta\right]  }\left(  u^{\left[  \beta\right]  },v^{\left[
\beta\right]  }\right)  :=\int_{\Omega^{\beta}}p_{\beta}^{n+1}\left(
-s_{\beta}\mathcal{R}\right)  ^{n}\partial_{t}u^{\left[  \beta\right]
}\partial_{t}v^{\left[  \beta\right]  }\ d\Gamma ds_{\beta},\\
r_{n}^{\left[  \beta\right]  }\left(  \delta;u^{\left[  \beta\right]
},v^{\left[  \beta\right]  }\right)  :=\int_{\Omega^{\beta}}J_{\delta,\beta
}^{-1}\left(  -s_{\beta}\mathcal{R}\right)  ^{n}\partial_{t}u^{\left[
\beta\right]  }\partial_{t}v^{\left[  \beta\right]  }\ d\Gamma ds_{\beta}.
\end{gather*}

\end{remark}

Inserting Expansion (\ref{21}) in (\ref{18.3}) and matching the same powers of
$\delta,$ we obtain the following variational equations, which hold for all
$v=\left(  v^{\left[  1\right]  },v^{\left[  2\right]  }\right)  $ in
$H^{1}\left(  \Gamma\times\left(  -1,1\right)  \right)  ,$
\begin{equation}
a_{0,2}^{\left[  1\right]  }\left(  \alpha_{\delta}u_{0}^{\left[  1\right]
}-\alpha_{i}U_{i,0},v^{\left[  1\right]  }\right)  +a_{0,2}^{\left[  2\right]
}\left(  \alpha_{\delta}u_{0}^{\left[  2\right]  }-\alpha_{e}U_{e,0}%
,v^{\left[  2\right]  }\right)  =0,\label{23}%
\end{equation}%
\begin{gather}
a_{1,2}^{\left[  1\right]  }\left(  \alpha_{\delta}u_{0}^{\left[  1\right]
}-\alpha_{i}U_{i,0},v^{\left[  1\right]  }\right)  +a_{0,2}^{\left[  1\right]
}\left(  \alpha_{\delta}u_{1}^{\left[  1\right]  }-\alpha_{i}U_{i,1}%
,v^{\left[  1\right]  }\right) \nonumber\\
+a_{1,2}^{\left[  2\right]  }\left(  \alpha_{\delta}u_{0}^{\left[  2\right]
}-\alpha_{e}U_{e,0},v^{\left[  2\right]  }\right)  +a_{0,2}^{\left[  2\right]
}\left(  \alpha_{\delta}u_{1}^{\left[  2\right]  }-\alpha_{e}U_{e,1}%
,v^{\left[  2\right]  }\right) \nonumber\\
=\alpha_{e}\int_{\Gamma}\partial_{\mathbf{n}}u_{e,0|\Gamma}v^{\left[
2\right]  }\left(  m,0\right)  \ d\Gamma-\alpha_{i}\int_{\Gamma}%
\partial_{\mathbf{n}}u_{i,0|\Gamma}v^{\left[  1\right]  }\left(  m,0\right)
\ d\Gamma,\label{24}%
\end{gather}%
\begin{gather}
a_{0,2}^{\left[  1\right]  }\left(  \alpha_{\delta}u_{2}^{\left[  1\right]
}-\alpha_{i}U_{i,2},v^{\left[  1\right]  }\right)  +a_{1,2}^{\left[  1\right]
}\left(  \alpha_{\delta}u_{1}^{\left[  1\right]  }-\alpha_{i}U_{i,1}%
,v^{\left[  1\right]  }\right) \nonumber\\
+\left(  a_{2,2}^{\left[  1\right]  }+a_{0,1}^{\left[  1\right]  }\right)
\left(  \alpha_{\delta}u_{0}^{\left[  1\right]  }-\alpha_{i}U_{i,0},v^{\left[
1\right]  }\right) \nonumber\\
+a_{0,2}^{\left[  2\right]  }\left(  \alpha_{\delta}u_{2}^{\left[  2\right]
}-\alpha_{e}U_{e,2},v^{\left[  2\right]  }\right)  +a_{1,2}^{\left[  2\right]
}\left(  \alpha_{\delta}u_{1}^{\left[  2\right]  }-\alpha_{e}U_{e,1}%
,v^{\left[  2\right]  }\right) \nonumber\\
+\left(  a_{2,2}^{\left[  2\right]  }+a_{0,1}^{\left[  2\right]  }\right)
\left(  \alpha_{\delta}u_{0}^{\left[  2\right]  }-\alpha_{e}U_{e,0},v^{\left[
2\right]  }\right) \nonumber\\
=\alpha_{e}\int_{\Gamma}\partial_{\mathbf{n}}u_{e,1|\Gamma}v^{\left[
2\right]  }\left(  m,0\right)  \ d\Gamma-\alpha_{i}\int_{\Gamma}%
\partial_{\mathbf{n}}u_{i,1|\Gamma}v^{\left[  1\right]  }\left(  m,0\right)
\ d\Gamma,\label{25}%
\end{gather}%
\begin{gather}
\alpha_{i}\int_{\Gamma}\partial_{\mathbf{n}}u_{i,2|\Gamma}v^{\left[  1\right]
}\left(  m,0\right)  \ d\Gamma+a_{0,2}^{\left[  1\right]  }\left(
\alpha_{\delta}u_{3}^{\left[  1\right]  }-\alpha_{i}U_{i,3},v^{\left[
1\right]  }\right) \nonumber\\
+a_{1,2}^{\left[  1\right]  }\left(  \alpha_{\delta}u_{2}^{\left[  1\right]
}-\alpha_{i}U_{i,2},v^{\left[  1\right]  }\right)  +\left(  a_{2,2}^{\left[
1\right]  }+a_{0,1}^{\left[  1\right]  }\right)  \left(  \alpha_{\delta}%
u_{1}^{\left[  1\right]  }-\alpha_{i}U_{i,1},v^{\left[  1\right]  }\right)
\nonumber\\
+a_{1,1}^{\left[  1\right]  }\left(  \alpha_{\delta}u_{0}^{\left[  1\right]
}-\alpha_{i}U_{i,0},v^{\left[  1\right]  }\right)  +a_{0,2}^{\left[  2\right]
}\left(  \alpha_{\delta}u_{3}^{\left[  2\right]  }-\alpha_{e}U_{e,3}%
,v^{\left[  2\right]  }\right) \nonumber\\
+a_{1,2}^{\left[  2\right]  }\left(  \alpha_{\delta}u_{2}^{\left[  2\right]
}-\alpha_{e}U_{e,2},v^{\left[  2\right]  }\right)  +\left(  a_{2,2}^{\left[
2\right]  }+a_{0,1}^{\left[  2\right]  }\right)  \left(  \alpha_{\delta}%
u_{1}^{\left[  2\right]  }-\alpha_{e}U_{e,1},v^{\left[  2\right]  }\right)
\nonumber\\
+a_{1,1}^{\left[  2\right]  }\left(  \alpha_{\delta}u_{0}^{\left[  2\right]
}-\alpha_{e}U_{e,0},v^{\left[  2\right]  }\right)  -\alpha_{e}\int_{\Gamma
}\partial_{\mathbf{n}}u_{e,2|\Gamma}v^{\left[  2\right]  }\left(  m,0\right)
\ d\Gamma=0,\label{25.5}%
\end{gather}%
\begin{gather}
\alpha_{i}\int_{\Gamma}\partial_{\mathbf{n}}u_{i,n|\Gamma}v^{\left[  1\right]
}\left(  m,0\right)  \ d\Gamma+a_{0,2}^{\left[  1\right]  }\left(
\alpha_{\delta}u_{n+1}^{\left[  1\right]  }-\alpha_{i}U_{i,n+1},v^{\left[
1\right]  }\right) \nonumber\\
+a_{1,2}^{\left[  1\right]  }\left(  \alpha_{\delta}u_{n}^{\left[  1\right]
}-\alpha_{i}U_{i,n},v^{\left[  1\right]  }\right)  +a_{0,1}^{\left[  1\right]
}\left(  \alpha_{\delta}u_{n-1}^{\left[  1\right]  }-\alpha_{i}U_{i,n-1}%
,v^{\left[  1\right]  }\right) \nonumber\\
+a_{1,1}^{\left[  1\right]  }\left(  \alpha_{\delta}u_{n-2}^{\left[  1\right]
}-\alpha_{i}U_{i,n-2},v^{\left[  1\right]  }\right)  +a_{2,1}^{\left[
1\right]  }\left(  \alpha_{\delta}u_{n-3}^{\left[  1\right]  }-\alpha
_{i}U_{i,n-3},v^{\left[  1\right]  }\right) \nonumber\\
+\sum\limits_{l=3}^{n}a_{l-1,2}^{\left[  1\right]  }\left(  \alpha_{\delta
}u_{n-l}^{\left[  1\right]  }-\alpha_{i}U_{i,n-l},v^{\left[  1\right]
}\right)  +a_{0,2}^{\left[  2\right]  }\left(  \alpha_{\delta}u_{n+1}^{\left[
2\right]  }-\alpha_{i}U_{e,n+1},v^{\left[  2\right]  }\right) \nonumber\\
+a_{1,2}^{\left[  2\right]  }\left(  \alpha_{\delta}u_{n}^{\left[  2\right]
}-\alpha_{i}U_{e,n},v^{\left[  2\right]  }\right)  +a_{0,1}^{\left[  2\right]
}\left(  \alpha_{\delta}u_{n-1}^{\left[  2\right]  }-\alpha_{i}U_{e,n-1}%
,v^{\left[  2\right]  }\right) \nonumber\\
+a_{1,1}^{\left[  2\right]  }\left(  \alpha_{\delta}u_{n-2}^{\left[  2\right]
}-\alpha_{i}U_{e,n-2},v^{\left[  2\right]  }\right)  +a_{2,1}^{\left[
2\right]  }\left(  \alpha_{\delta}u_{n-3}^{\left[  2\right]  }-\alpha
_{i}U_{e,n-3},v^{\left[  2\right]  }\right) \nonumber\\
+\sum\limits_{l=3}^{n}a_{l-1,2}^{\left[  2\right]  }\left(  \alpha_{\delta
}u_{n-l}^{\left[  2\right]  }-\alpha_{i}U_{e,n-l},v^{\left[  2\right]
}\right)  -\alpha_{e}\int_{\Gamma}\partial_{\mathbf{n}}u_{e,n|\Gamma
}v^{\left[  2\right]  }\left(  m,0\right)  d\Gamma=0,\ n\geq3.\label{26}%
\end{gather}

\subsection{Calculation of the first terms}

In this paragraph, we first recall some theoretical results needed for our
calculation. After, we calculate explicitly the first two terms of
Expansions (\ref{13})-(\ref{14}) and (\ref{16}) in order to present a
recursive method to define successively the terms of these expansions.

Let $s$ be a nonnegative real number. We define $PH^{s}(\Omega)$ (see
\cite{clair-perrussel}) the space of functions in $\Omega$, with $H^{s}%
$-regularity in $\Omega_{e}$ and $\Omega_{i}$, as follows%
\[
PH^{s}(\Omega):=\left\{  V=(V_{i},V_{e});V_{i}\in H^{s}(\Omega_{i})\text{ and
}V_{e}\in H^{s}(\Omega_{e})\right\}  ,
\]
equipped with the norm%
\[
\left\Vert V\right\Vert _{PH^{s}(\Omega)}:=\left(  \left\Vert V_{i}\right\Vert
_{H^{s}(\Omega_{i})}+\left\Vert V_{e}\right\Vert _{H^{s}(\Omega_{e})}\right)
^{1/2}.
\]
We need the following theorem. Its proof \cite[p. 122]{clair-phd} is an
application of the reflection principle \cite[p. 147]{Li-Vogelius}.

\begin{theorem}
\label{theo1}Let $G$ belongs to $H^{s}(\Gamma),$ $s\geq-1/2$. Then the
following problem
\[
\left\{
\begin{array}
[c]{ll}%
-div\left(  \alpha_{i}\nabla U_{i}\right)  =0 & \text{in }\Omega_{i},\\
-div\left(  \alpha_{e}\nabla U_{e}\right)  =0 & \text{in }\Omega_{e},\\
U_{i|\Gamma}=U_{e|\Gamma} & \text{on }\Gamma,\\
\alpha_{i}\partial_{\mathbf{n}}U_{i|\Gamma}-\alpha_{e}\partial_{\mathbf{n}%
}U_{e|\Gamma}=G & \text{on }\Gamma,\\
U_{e|\partial\Omega}=0 & \text{on }\partial\Omega,
\end{array}
\right.
\]
admits a unique solution $U=(U_{i},U_{e})$ in $PH^{s+3/2}(\Omega).$ Moreover,
if $m$ is a nonnegative integer, and $s>m+\dfrac{P-1}{2}$. Then
\[
U_{i}\in\mathcal{C}^{m}(\overline{\Omega}_{i})\ \ \ \text{and}\ \ \ U_{e}%
\in\mathcal{C}^{m}(\overline{\Omega}_{e}).
\]

\end{theorem}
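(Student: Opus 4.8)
The plan is to establish well-posedness at the energy level by a variational argument, and then to upgrade the regularity by elliptic estimates, the behaviour across the interface $\Gamma$ being the only delicate point. First I would recast the problem variationally. The conditions $U_{i|\Gamma}=U_{e|\Gamma}$ and $U_{e|\partial\Omega}=0$ say precisely that $U=(U_{i},U_{e})$ is the restriction of a single function in $H_{0}^{1}(\Omega)$. Testing the two interior equations against $V\in H_{0}^{1}(\Omega)$ and summing via Green's formula, the two interface contributions merge, through the jump condition $\alpha_{i}\partial_{\mathbf{n}}U_{i|\Gamma}-\alpha_{e}\partial_{\mathbf{n}}U_{e|\Gamma}=G$ (with $\mathbf{n}$ oriented from $\Omega_{i}$ to $\Omega_{e}$), into a single duality pairing, yielding the equivalent problem: find $U\in H_{0}^{1}(\Omega)$ with
\[
\int_{\Omega}\alpha\,\nabla U\cdot\nabla V\,dx=\left\langle G,V_{|\Gamma}\right\rangle _{H^{-1/2}(\Gamma)\times H^{1/2}(\Gamma)}\qquad\forall V\in H_{0}^{1}(\Omega).
\]
The bilinear form is continuous and, since $\alpha\geq\min(\alpha_{i},\alpha_{e})>0$ and $U\in H_{0}^{1}(\Omega)$, coercive by the Poincaré inequality; the right-hand side is a bounded linear form by the trace theorem for every $G\in H^{-1/2}(\Gamma)$. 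Lax--Milgram then delivers a unique $U\in PH^{1}(\Omega)$, which settles existence, uniqueness, and the endpoint case $s=-1/2$.

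Next, for $s>-1/2$, I would localize the regularity question. In the interiors of $\Omega_{i}$ and $\Omega_{e}$ the coefficient is constant, so interior elliptic regularity applies; near $\partial\Omega$ the equation is $-\alpha_{e}\Delta U_{e}=0$ with smooth boundary and homogeneous Dirichlet data, so standard up-to-the-boundary estimates give local $H^{s+3/2}$ control. Everything thus reduces to establishing $H^{s+3/2}$ regularity in a two-sided neighbourhood of $\Gamma$.

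The core step, and the point I expect to be hardest, is the transmission regularity across $\Gamma$, for which I would invoke the reflection principle of \cite{Li-Vogelius}. Using a partition of unity and local charts I would flatten $\Gamma$ to $\{x_{P}=0\}$, turning the problem into a transmission problem across a flat interface with continuity of the trace and a prescribed jump $G$ of the conormal derivative. One then extends $U_{i}$ across the interface by a reflection weighted by the ratio $\alpha_{i}/\alpha_{e}$, designed so that the continuity and conormal-jump relations become exactly the matching conditions making the reflected field a weak solution of a single divergence-form elliptic equation, modulo lower-order terms generated by the curvature of $\Gamma$ (which carry one fewer derivative and are absorbed by an induction on $s$). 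Elliptic regularity for this single equation then yields the gain of $3/2$ derivatives over the datum, that is $U\in PH^{s+3/2}(\Omega)$ locally, and patching gives the global estimate. The delicate verification is that the weighted reflection reproduces both transmission conditions simultaneously and that the flattening error terms stay genuinely lower order.

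Finally, for the $\mathcal{C}^{m}$ assertion I would simply combine the above with Sobolev embedding: the hypothesis $s>m+\tfrac{P-1}{2}$ gives $s+\tfrac{3}{2}>m+\tfrac{P}{2}+1>m+\tfrac{P}{2}$, so on the smooth bounded domains $\Omega_{i},\Omega_{e}\subset\mathbb{R}^{P}$ one has $H^{s+3/2}(\Omega_{i})\hookrightarrow\mathcal{C}^{m}(\overline{\Omega}_{i})$ and likewise for $\Omega_{e}$, which is the stated conclusion.
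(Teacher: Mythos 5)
Your proposal is correct and follows essentially the same route as the paper, which gives no details but explicitly attributes the result to an application of the reflection principle of Li and Vogelius \cite[p. 147]{Li-Vogelius} (via \cite[p. 122]{clair-phd}) — precisely the mechanism you invoke for the transmission regularity, preceded by the standard Lax--Milgram step and followed by Sobolev embedding. Nothing further is needed.
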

We also need the following technical lemma. This construction was motivated by
\cite{benlem96} and its proof is a straightforward verification.

\begin{lemma}
\label{lem1}For $\beta=1,2$, let $q^{\left[  \beta\right]  }$ be a given
function in $L^{2}(\Gamma)$ and let $k^{\left[  \beta\right]  }$ be a
vectorial function in $L^{2}(\Omega^{\beta},\mathbb{C}^{3})$ such that the
partial application $s_{\beta}\rightarrow k^{\left[  \beta\right]
}(.,s_{\beta})$ is valued in the space of vectorial fields tangent to $\Gamma$
and also $div_{\Gamma}k^{\left[  \beta\right]  }\in L^{2}(\Omega^{\beta}).$
Then the solution $h^{\left[  \beta\right]  }$ of the variational equation%
\begin{gather*}
\mathcal{L}^{\left[  \beta\right]  }v^{\left[  \beta\right]  }:=\int
_{\Omega^{\beta}}h^{\left[  \beta\right]  }\partial_{s_{\beta}}v^{\left[
\beta\right]  }\ d\Gamma ds_{\beta}+\int_{\Omega^{\beta}}k^{\left[
\beta\right]  }.\nabla_{\Gamma}v^{\left[  \beta\right]  }\ d\Gamma ds_{\beta
}=0;\\
\forall v^{\left[  \beta\right]  }\in H^{1}(\Omega^{\beta}),\ v^{\left[
\beta\right]  }(.,0)=0,
\end{gather*}
is explicitly given by%
\[
h^{\left[  \beta\right]  }\left(  m,s_{\beta}\right)  =\int_{s_{\beta}%
}^{(-1)^{\beta}}div_{\Gamma}k^{\left[  \beta\right]  }\left(  m,\lambda
\right)  \ d\lambda.
\]
Moreover, if $v^{\left[  \beta\right]  }(.,0)\neq0,$ we have%
\begin{align*}
\mathcal{L}^{\left[  \beta\right]  }v^{\left[  \beta\right]  }  &
=(-1)^{\beta+1}\int_{\Gamma}h^{\left[  \beta\right]  }(m,0)v^{\left[
\beta\right]  }(m,0)\ d\Gamma\\
&  =\int_{\Gamma}\left[  (-1)^{\beta+1}\int_{0}^{(-1)^{\beta}}div_{\Gamma
}k^{\left[  \beta\right]  }\left(  m,s_{\beta}\right)  \ ds_{\beta}\right]
v^{\left[  \beta\right]  }\left(  m,0\right)  \ d\Gamma.
\end{align*}

\end{lemma}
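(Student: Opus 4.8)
The plan is to verify directly that the proposed $h^{[\beta]}$ solves the variational equation, handling the two integrals in $\mathcal{L}^{[\beta]}v^{[\beta]}$ separately: integrating by parts in the tangential variables for the $k^{[\beta]}$ term and in the normal variable $s_{\beta}$ for the $h^{[\beta]}$ term, and then checking that the two resulting volume integrals cancel. First I would record two elementary properties of the candidate $h^{[\beta]}(m,s_{\beta})=\int_{s_{\beta}}^{(-1)^{\beta}}div_{\Gamma}k^{[\beta]}(m,\lambda)\ d\lambda$: by the fundamental theorem of calculus it satisfies $\partial_{s_{\beta}}h^{[\beta]}=-div_{\Gamma}k^{[\beta]}$, and it vanishes at the outer face, $h^{[\beta]}(\cdot,(-1)^{\beta})=0$. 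The hypothesis $div_{\Gamma}k^{[\beta]}\in L^{2}(\Omega^{\beta})$ ensures $h^{[\beta]}$ is well defined and lies in $L^{2}(\Omega^{\beta})$ (with $H^{1}$-regularity in $s_{\beta}$), so all the manipulations below make sense; to legitimize the integrations by parts at this regularity I would first argue for $v^{[\beta]}$ smooth and then pass to the limit by density of smooth functions in $H^{1}(\Omega^{\beta})$.

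For the tangential term, since for a.e. $s_{\beta}$ the field $k^{[\beta]}(\cdot,s_{\beta})$ is tangent to $\Gamma$ and $\Gamma$ is a closed surface without boundary, surface integration by parts yields $\int_{\Gamma}k^{[\beta]}\cdot\nabla_{\Gamma}v^{[\beta]}\ d\Gamma=-\int_{\Gamma}(div_{\Gamma}k^{[\beta]})v^{[\beta]}\ d\Gamma$ with no edge contribution, and integrating in $s_{\beta}$ gives $\int_{\Omega^{\beta}}k^{[\beta]}\cdot\nabla_{\Gamma}v^{[\beta]}=-\int_{\Omega^{\beta}}(div_{\Gamma}k^{[\beta]})v^{[\beta]}$. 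For the normal term, integrating by parts over $I_{\beta}$ and using $\partial_{s_{\beta}}h^{[\beta]}=-div_{\Gamma}k^{[\beta]}$ produces a boundary term at the two endpoints of $I_{\beta}$ plus $\int_{\Omega^{\beta}}(div_{\Gamma}k^{[\beta]})v^{[\beta]}$. When $v^{[\beta]}(\cdot,0)=0$ the boundary term vanishes: at the outer endpoint because $h^{[\beta]}(\cdot,(-1)^{\beta})=0$, and at $s_{\beta}=0$ because $v^{[\beta]}(\cdot,0)=0$. Adding the two contributions, the $div_{\Gamma}k^{[\beta]}$ volume integrals cancel and $\mathcal{L}^{[\beta]}v^{[\beta]}=0$. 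Uniqueness follows from the fact that $v^{[\beta]}\mapsto\partial_{s_{\beta}}v^{[\beta]}$, over $v^{[\beta]}\in H^{1}(\Omega^{\beta})$ with $v^{[\beta]}(\cdot,0)=0$, is surjective onto $L^{2}$ in the normal variable, so the defining relation pins down $h^{[\beta]}$ uniquely.

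For the \textit{moreover} part I would repeat the same computation but retain the boundary term when $v^{[\beta]}(\cdot,0)\neq0$. Only the endpoint $s_{\beta}=0$ now contributes, and a short case check on $\beta$ gives the sign $(-1)^{\beta+1}$ (for $\beta=1$ the point $s_{\beta}=0$ is the upper limit of $I_{1}=(-1,0)$, while for $\beta=2$ it is the lower limit of $I_{2}=(0,1)$), leading to $\mathcal{L}^{[\beta]}v^{[\beta]}=(-1)^{\beta+1}\int_{\Gamma}h^{[\beta]}(m,0)v^{[\beta]}(m,0)\ d\Gamma$; substituting $h^{[\beta]}(m,0)=\int_{0}^{(-1)^{\beta}}div_{\Gamma}k^{[\beta]}(m,s_{\beta})\ ds_{\beta}$ gives the stated expression. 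The argument is indeed a direct verification; the only steps requiring care are the surface integration by parts, where the tangency of $k^{[\beta]}$ together with $\Gamma$ being boundaryless is what removes any edge term, and the density argument justifying both integrations by parts at the stated $L^{2}$/$H^{1}$ regularity.
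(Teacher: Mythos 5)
Your proof is correct and is precisely the ``straightforward verification'' that the paper invokes without giving details (its entire proof of Lemma \ref{lem1} is the remark that the construction is motivated by \cite{benlem96} and that the verification is straightforward): your fundamental-theorem-of-calculus identity $\partial_{s_{\beta}}h^{[\beta]}=-div_{\Gamma}k^{[\beta]}$, the vanishing of $h^{[\beta]}$ at the outer face $s_{\beta}=(-1)^{\beta}$, the surface integration by parts with no edge term (tangency of $k^{[\beta]}$ on the boundaryless $\Gamma$), and the sign bookkeeping giving $(-1)^{\beta+1}$ all check out. One cosmetic point: the map $v^{[\beta]}\mapsto\partial_{s_{\beta}}v^{[\beta]}$ on $\left\{ v^{[\beta]}\in H^{1}(\Omega^{\beta}):v^{[\beta]}(\cdot,0)=0\right\}$ is not literally surjective onto $L^{2}(\Omega^{\beta})$ (the $s_{\beta}$-antiderivative of a general $L^{2}$ function need not have an $L^{2}$ tangential gradient), but its image is dense there, which is all your uniqueness argument requires.
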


\subsubsection{Term of order 0}

Equation (\ref{23}) implies that $\partial_{s_{\beta}}u_{0}^{\left[
\beta\right]  }=0.$ Using (\ref{18}), (\ref{17}) and (\ref{1.1.8}), we obtain%
\begin{equation}
u_{i,0|\Gamma}=u_{0}^{\left[  1\right]  }(m,s_{1})=u_{0}^{\left[  2\right]
}(m,s_{2})=u_{e,0|\Gamma},\ m\in\Gamma.\label{27}%
\end{equation}
The choice of $v$ such that $v^{\left[  2\right]  }=0$ in (\ref{24}) gives
\[
a_{0,2}^{\left[  1\right]  }\left[  \alpha_{\delta}u_{1}^{\left[  1\right]
}-\alpha_{i}U_{i,1},v^{\left[  1\right]  }\right]  =0.
\]
An int\'{e}gration by parts in $s_{1}$ leads to
\begin{equation}
p_{1}^{-1}\alpha_{\delta}\ \partial_{s_{1}}u_{1}^{\left[  1\right]  }%
=\alpha_{i}\partial_{\mathbf{n}}u_{i,0|\Gamma}.\label{28}%
\end{equation}
Similarly, the choice of $v$ such that $v^{\left[  1\right]  }=0$ in
(\ref{24}) gives%
\[
a_{0,2}^{\left[  2\right]  }(\alpha_{\delta}u_{1}^{\left[  2\right]  }%
-\alpha_{e}U_{e,0},v^{\left[  2\right]  })=0.
\]
We obtain
\begin{equation}
p_{2}^{-1}\alpha_{\delta}\ \partial_{s_{2}}u_{1}^{\left[  2\right]  }%
=\alpha_{e}\partial_{\mathbf{n}}u_{e,0|\Gamma}.\label{29}%
\end{equation}
Therefore
\[
\alpha_{i}\int_{\Gamma}\partial_{\mathbf{n}}u_{i,0|\Gamma}v^{\left[  1\right]
}(m,0)d\Gamma=\alpha_{e}\int_{\Gamma}\partial_{\mathbf{n}}u_{e,0|\Gamma
}v^{\left[  2\right]  }(m,0)d\Gamma.
\]
As $v^{\left[  1\right]  }(m,0)=v^{\left[  2\right]  }(m,0)$,
\begin{equation}
\alpha_{i}\partial_{\mathbf{n}}u_{i,0|\Gamma}=\alpha_{e}\partial_{\mathbf{n}%
}u_{e,0|\Gamma}.\label{30}%
\end{equation}
Let us define $\alpha_{0}$ and $u_{n}$ $(\forall n\in%
\mathbb{N}
)$ by
\[
\alpha_{0}(x)=\left\{
\begin{array}
[c]{ll}%
\alpha_{e} & \text{if }x\in\Omega_{e},\\
\alpha_{i} & \text{if }x\in\Omega_{i},
\end{array}
\right.  \text{ and \ }u_{n}=\left\{
\begin{array}
[c]{ll}%
u_{e,n} & \text{in }\Omega_{e},\\
u_{i,n} & \text{in }\Omega_{i}.
\end{array}
\right.
\]
Therefore, with (\ref{15}), (\ref{27}) and (\ref{30}), $u_{0}$ satisfies the
following problem
\[
\left\{
\begin{array}
[c]{ll}%
-div\left(  \alpha_{0}\nabla u_{0}\right)  =f & \text{in }\Omega,\\
u_{0|\partial\Omega}=0 & \text{on }\partial\Omega.
\end{array}
\right.
\]
Elliptic regularity results (see e.g. \cite{adn2}) show that if $f$ belongs to
$\mathcal{C}^{\infty}(\Omega)$, then $(u_{i,0},u_{e,0})$ is a well-defined
element of $\mathcal{C}^{\infty}(\overline{\Omega}_{i})\times\mathcal{C}%
^{\infty}(\overline{\Omega}_{e}).$ As a consequence the first term is determined.

\subsubsection{Term of order 1}

Integrating Relations (\ref{28}) and (\ref{29}) in $s_{\beta},$ yields%
\begin{align*}
u_{1}^{\left[  1\right]  }(m,s_{1})  & =u_{i,1|\Gamma}+p_{1}\left[
(s_{1}+1)\alpha_{i}\alpha_{\delta}^{-1}-1\right]  \partial_{\mathbf{n}%
}u_{i,0|\Gamma},\ \forall(m,s_{1})\in\Omega^{1},\\
u_{1}^{\left[  2\right]  }(m,s_{2})  & =u_{e,1|\Gamma}+p_{2}\left[
(s_{2}-1)\alpha_{e}\alpha_{\delta}^{-1}+1\right]  \partial_{\mathbf{n}%
}u_{e,0|\Gamma},\ \forall(m,s_{2})\in\Omega^{2}.
\end{align*}
By identifying terms of order 1 in (\ref{17}) and (\ref{18}), we obtain the
first transmission condition on $\Gamma$%
\begin{equation}
u_{i,1|\Gamma}-u_{e,1|\Gamma}=p_{1}(1-\alpha_{i}\alpha_{\delta}^{-1}%
)\partial_{\mathbf{n}}u_{i,0|\Gamma}+p_{2}(1-\alpha_{e}\alpha_{\delta}%
^{-1})\partial_{\mathbf{n}}u_{e,0|\Gamma}.\label{30a}%
\end{equation}
The second one follows the same lines as for order $0.$ The choice of $v$ such
that $v^{\left[  2\right]  }=0$ in (\ref{25}) gives%
\[
a_{0,2}^{\left[  1\right]  }\left(  \alpha_{\delta}u_{2}^{\left[  1\right]
}-\alpha_{i}U_{i,2},v^{\left[  1\right]  }\right)  +a_{0,1}^{\left[  1\right]
}\left(  \alpha_{\delta}u_{0}^{\left[  1\right]  }-\alpha_{i}U_{i,0}%
,v^{\left[  1\right]  }\right)  =0.
\]
We apply Lemma \ref{lem1} with%
\begin{align*}
h^{\left[  1\right]  } &  =p_{1}^{-1}\alpha_{\delta}\ \partial_{s_{1}}%
u_{2}^{\left[  1\right]  }-p_{1}^{-1}\alpha_{i}U_{i,2}=p_{1}^{-1}%
\alpha_{\delta}\ \partial_{s_{1}}u_{2}^{\left[  1\right]  }-\alpha_{i}%
\partial_{\mathbf{n}}u_{i,1|\Gamma}-s_{1}p_{1}\partial_{\mathbf{n}}%
^{2}u_{i,0|\Gamma},\\
k^{\left[  1\right]  } &  =p_{1}\nabla_{\Gamma}\left(  p_{1}^{-1}%
u_{0}^{\left[  1\right]  }-\alpha_{i}U_{i,0}\right)  =p_{1}\left(
\alpha_{\delta}-\alpha_{i}\right)  \nabla_{\Gamma}u_{i,0|\Gamma},
\end{align*}
we find%
\[
p_{1}^{-1}\alpha_{\delta}\ \partial_{s_{1}}u_{2}^{\left[  1\right]  }%
(m,s_{1})-\alpha_{i}\partial_{\mathbf{n}}u_{i,1|\Gamma}-s_{1}p_{1}%
\partial_{\mathbf{n}}^{2}u_{i,0|\Gamma}=-\left(  s_{1}+1\right)  p_{1}\left(
\alpha_{\delta}-\alpha_{i}\right)  \Delta_{\Gamma}u_{i,0|\Gamma}.
\]
Morover, for all $v^{\left[  1\right]  }$ we obtain
\[
\mathcal{L}^{\left[  1\right]  }v^{\left[  1\right]  }=-\int_{\Gamma}%
p_{1}\left(  \alpha_{\delta}-\alpha_{1}\right)  \Delta_{\Gamma}u_{i,0|\Gamma
}\ v^{\left[  1\right]  }(m,0)\ d\Gamma.
\]
Again Similarly, the choice of $v$ such that $v^{\left[  1\right]  }=0$ in
(\ref{25}) gives
\[
a_{0,2}^{\left[  2\right]  }\left(  \alpha_{\delta}u_{2}^{\left[  2\right]
}-\alpha_{e}U_{e,2},v^{\left[  2\right]  }\right)  +a_{0,1}^{\left[  2\right]
}\left(  \alpha_{\delta}u_{0}^{\left[  2\right]  }-\alpha_{e}U_{e,0}%
,v^{\left[  2\right]  }\right)  =0.
\]
We apply Lemma \ref{lem1} with
\begin{align*}
h^{\left[  2\right]  } &  =p_{2}^{-1}\alpha_{\delta}\ \partial_{s_{2}}%
u_{2}^{\left[  2\right]  }-p_{2}^{-1}\alpha_{e}U_{e,2}=p_{2}^{-1}%
\alpha_{\delta}\ \partial_{s_{2}}u_{2}^{\left[  2\right]  }-\alpha_{e}%
\partial_{\mathbf{n}}u_{e,1|\Gamma}-s_{2}p_{2}\partial_{\mathbf{n}}%
^{2}u_{e,0|\Gamma},\\
k^{\left[  2\right]  } &  =p_{2}\nabla_{\Gamma}\left(  \alpha_{\delta}%
u_{0}^{\left[  2\right]  }-\alpha_{e}U_{e,0}\right)  =p_{2}\left(
\alpha_{\delta}-\alpha_{e}\right)  \nabla_{\Gamma}u_{e,0|\Gamma},
\end{align*}
we find%
\[
p_{2}^{-1}\alpha_{\delta}\ \partial_{s_{2}}u_{2}^{\left[  2\right]  }%
(m,s_{2})-\alpha_{e}\partial_{\mathbf{n}}u_{e,1|\Gamma}-s_{2}p_{2}%
\partial_{\mathbf{n}}^{2}u_{e,0|\Gamma}=\left(  1-s_{2}\right)  \left(
\alpha_{\delta}-\alpha_{e}\right)  \Delta_{\Gamma}u_{e,0|\Gamma}.
\]
Morover, for all $v^{\left[  2\right]  }$ we obtain%
\[
\mathcal{L}^{\left[  2\right]  }v^{\left[  2\right]  }=-\int_{\Gamma}%
p_{2}\left(  \alpha_{\delta}-\alpha_{e}\right)  \Delta_{\Gamma}u_{e,0|\Gamma
}\ v^{\left[  2\right]  }(m,0)\ d\Gamma.
\]
As a consequence,
\begin{align*}
&  \int_{\Gamma}\left[  \alpha_{i}\partial_{\mathbf{n}}u_{i,1|\Gamma}%
-p_{1}\left(  \alpha_{\delta}-\alpha_{1}\right)  \Delta_{\Gamma}u_{i,0|\Gamma
}\right]  v^{\left[  1\right]  }\left(  m,0\right)  \ d\Gamma\\
&  =\int_{\Gamma}\left[  \alpha_{e}\partial_{\mathbf{n}}u_{e,1|\Gamma}%
+p_{2}\left(  \alpha_{\delta}-\alpha_{e}\right)  \Delta_{\Gamma}u_{e,0|\Gamma
}\right]  v^{\left[  2\right]  }\left(  m,0\right)  \ d\Gamma.
\end{align*}
As $v^{\left[  1\right]  }(m,0)=v^{\left[  2\right]  }(m,0)$,
\begin{equation}
\alpha_{i}\partial_{\mathbf{n}}u_{i,1|\Gamma}-\alpha_{e}\partial_{\mathbf{n}%
}u_{e,1|\Gamma}=p_{1}(\alpha_{\delta}-\alpha_{i})\Delta_{\Gamma}u_{i,0|\Gamma
}+p_{2}(\alpha_{\delta}-\alpha_{e})\Delta_{\Gamma}u_{e,0|\Gamma}.\label{31}%
\end{equation}
It follows from (\ref{15}), (\ref{30a}), (\ref{31}) and Theorem \ref{theo1}
that $u_{1}$ is the unique solution of the following problem%
\[
\left\{
\begin{array}
[c]{ll}%
-div\left(  \alpha_{i}\nabla u_{i,1}\right)  =0 & \text{in }\Omega_{i},\\
-div\left(  \alpha_{e}\nabla u_{e,1}\right)  =0 & \text{in }\Omega_{e},\\
u_{e,1|\partial\Omega}=0 & \text{on }\partial\Omega,
\end{array}
\right.
\]
with transmission conditions on $\Gamma$%
\begin{align*}
u_{i,1|\Gamma}-u_{e,1|\Gamma} &  =p_{1}(1-\alpha_{i}\alpha_{\delta}%
^{-1})\partial_{\mathbf{n}}u_{i,0|\Gamma}+p_{2}(1-\alpha_{e}\alpha_{\delta
}^{-1})\partial_{\mathbf{n}}u_{e,0|\Gamma},\\
\alpha_{i}\partial_{\mathbf{n}}u_{i,1|\Gamma}-\alpha_{e}\partial_{\mathbf{n}%
}u_{e,1|\Gamma} &  =p_{1}(\alpha_{\delta}-\alpha_{i})\Delta_{\Gamma
}u_{i,0|\Gamma}+p_{2}(\alpha_{\delta}-\alpha_{e})\Delta_{\Gamma}u_{e,0|\Gamma
}.
\end{align*}
or
\begin{align*}
u_{i,1|\Gamma}-u_{e,1|\Gamma} &  =\left[  p_{1}(1-\alpha_{i}\alpha_{\delta
}^{-1})+p_{2}(\alpha_{i}\alpha_{e}^{-1}-\alpha_{i}\alpha_{\delta}%
^{-1})\right]  \partial_{\mathbf{n}}u_{i,0|\Gamma},\\
\alpha_{i}\partial_{\mathbf{n}}u_{i,1|\Gamma}-\alpha_{e}\partial_{\mathbf{n}%
}u_{e,1|\Gamma} &  =\left[  p_{1}(\alpha_{\delta}-\alpha_{i})+p_{2}%
(\alpha_{\delta}-\alpha_{e})\right]  \Delta_{\Gamma}u_{i,0|\Gamma}.
\end{align*}

\section{Convergence Theorem}

The process described in the previous section can be continued up to any order
provided that the data are sufficiently regular. We can also estimate the
error made by truncating the series after a finite number of terms. Let $n$ be
in $\mathbb{N},$ we set%
\[
u_{i,\delta}^{\left(  n\right)  }:=\sum\limits_{j=0}^{n}\delta^{j}%
u_{i,j},\text{ }\ u_{e,\delta}^{\left(  n\right)  }:=\sum\limits_{j=0}%
^{n}\delta^{j}u_{e,j}\text{ \ \ and\ }\ u_{d,\delta}^{\left(  n\right)
}:=\left\{
\begin{array}
[c]{c}%
u_{d_{1},\delta}^{\left(  n\right)  }:=\sum\limits_{j=0}^{n}\delta^{j}%
u_{d_{1},j}\text{ in }\Omega_{\delta,1,}\\
u_{d_{2},\delta}^{\left(  n\right)  }:=\sum\limits_{j=0}^{n}\delta^{j}%
u_{d_{2},j}\text{ in }\Omega_{\delta,2},
\end{array}
\right.
\]
where $u_{d_{\beta},j}(x):=\widetilde{u}_{d_{\beta},j}(m,\delta s_{\beta
}):=u_{j}^{\left[  \beta\right]  }(m,s_{\beta});$ $\forall x=\Phi_{\beta
}(m,s_{\beta})\in\Omega_{\delta,\beta}$.

\begin{theorem}
[Convergence Theorem]\label{theo2}For all integers $n$, there exists a
constant $c$ independent of $\delta$ such as
\[
\left\Vert u_{i,\delta}-u_{i,\delta}^{\left(  n\right)  }\right\Vert
_{H^{1}(\Omega_{i,\delta})}+\delta^{1/2}\left\Vert u_{d,\delta}-u_{d,\delta
}^{\left(  n\right)  }\right\Vert _{H^{1}(\Omega_{\delta})}+\left\Vert
u_{e,\delta}-u_{e,\delta}^{\left(  n\right)  }\right\Vert _{H^{1}%
(\Omega_{e,\delta})}\leq c\delta^{n+1}.
\]

\end{theorem}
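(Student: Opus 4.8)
The plan is to combine the uniform coercivity of the diffusion form with a consistency estimate for the truncated ansatz, gaining the final half-power of $\delta$ by a triangle-inequality argument using one extra term. Write $A_\delta(u,v):=\int_\Omega\alpha\,\nabla u\cdot\nabla v\,dx$, so that $u_\delta\in H^1_0(\Omega)$ is characterised by $A_\delta(u_\delta,v)=\int_\Omega f v\,dx$ for all $v\in H^1_0(\Omega)$. Since $\alpha\ge\alpha_\ast:=\min(\alpha_i,\alpha_\delta,\alpha_e)>0$ and the Poincaré constant of $\Omega$ does not depend on the interior partition, $A_\delta$ is coercive on $H^1_0(\Omega)$ uniformly in $\delta$. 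I would first prove, for every integer $N$, the full (unweighted) estimate $\|u_\delta-u_\delta^{(N)}\|_{H^1(\Omega)}\le c\,\delta^{N+1/2}$, and then recover the stated weighted bound at level $n$ by applying it with $N=n+1$.

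For the consistency step, fix $N$ and replace the piecewise $u_\delta^{(N)}$ by an admissible $\widehat u_\delta^{(N)}\in H^1_0(\Omega)$: the outer condition is already exact since $u_{e,j|\partial\Omega}=0$ by (\ref{15}), while the matching relations (\ref{17})--(\ref{18}), truncated at order $\delta^N$, leave $O(\delta^{N+1})$ Dirichlet jumps across $\Gamma_{\delta,1}$ and $\Gamma_{\delta,2}$ (the Taylor remainders of the terms $u_{i,j},u_{e,j}$, smooth by Theorem \ref{theo1} and elliptic regularity, being $O(\delta^{N+1})$ in $H^{1/2}(\Gamma)$); I would lift these jumps, changing every norm by at most $O(\delta^{N+1})$. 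Computing $A_\delta(\widehat u_\delta^{(N)},v)-\int_\Omega f v$, the volume contributions on $\Omega_{i,\delta}$ and $\Omega_{e,\delta}$ cancel $\int f v$ exactly because $-div(\alpha_i\nabla u_{i,\delta}^{(N)})=f$ and $-div(\alpha_e\nabla u_{e,\delta}^{(N)})=f$ by (\ref{15}). What survives is the thin-layer tail: the hierarchy (\ref{23})--(\ref{26}) annihilates every power $\delta^{-1},\dots,\delta^{N-1}$ of $\delta a_\delta^{[\beta]}$ applied to the truncated inner expansion, so the leading residual is of order $\delta^N$ and of normal type, being $a_{0,2}^{[\beta]}$ applied to the discarded order-$(N{+}1)$ profiles, together with interface (Taylor-remainder) terms of size $O(\delta^{N+1})$. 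Since $\|\nabla v\|_{L^2(\Omega_{\delta,\beta})}^2=\delta\,a_\delta^{[\beta]}(v^{[\beta]},v^{[\beta]})$ by (\ref{4}), the uniform lower bound on $\det J_{\delta,\beta}$ for small $\delta$ gives $\|\partial_{s_\beta}v^{[\beta]}\|_{L^2(\Omega^\beta)}\le c\,\delta^{1/2}\|\nabla v\|_{L^2(\Omega_\delta)}$, whence
\[
\left|A_\delta(\widehat u_\delta^{(N)},v)-\int_\Omega f v\right|\le c\,\delta^{N+1}\Big(\|v\|_{H^1(\Omega_{i,\delta})}+\delta^{-1/2}\|v\|_{H^1(\Omega_\delta)}+\|v\|_{H^1(\Omega_{e,\delta})}\Big).
\]

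Testing with $v=u_\delta-\widehat u_\delta^{(N)}$, coercivity bounds $\|\nabla v\|_{L^2(\Omega)}^2$ from below, while the global Poincaré inequality bounds the three subdomain norms on the right by $\|\nabla v\|_{L^2(\Omega)}$; since the $\delta^{-1/2}$ weight dominates, the right-hand side is $\le c\,\delta^{N+1/2}\|\nabla v\|_{L^2(\Omega)}$, giving $\|u_\delta-\widehat u_\delta^{(N)}\|_{H^1(\Omega)}\le c\,\delta^{N+1/2}$ after dividing and undoing the $O(\delta^{N+1})$ lift. To reach the sharp weighted bound at level $n$, I would split $u_\delta-u_\delta^{(n)}=(u_\delta-u_\delta^{(n+1)})+\delta^{n+1}u_{n+1}$, apply the full-norm estimate with $N=n+1$ to the first summand ($O(\delta^{n+3/2})$ in each subdomain), and treat the second directly: in the bulk $\delta^{n+1}\|u_{n+1}\|_{H^1}=O(\delta^{n+1})$, while in the layer the anisotropic scaling gives $\|u_{d,n+1}\|_{H^1(\Omega_\delta)}=O(\delta^{-1/2})$, so $\delta^{1/2}\cdot\delta^{n+1}\|u_{d,n+1}\|_{H^1(\Omega_\delta)}=O(\delta^{n+1})$. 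Summing reproduces the claim.

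The main obstacle is the anisotropic scaling in the layer: normal derivatives carry a factor $\delta^{-1}$ while the volume element carries a factor $\delta$, so the natural control of the error in $\Omega_\delta$ is weaker by $\delta^{1/2}$ than in the bulk, which is exactly the weight in the statement and also the reason the direct energy estimate only yields $\delta^{n+1/2}$ (hence the need for the extra term). Making this precise requires careful bookkeeping of the powers produced by $\delta a_\delta^{[\beta]}$, confirming that (\ref{23})--(\ref{26}) cancels everything down to the order-$\delta^N$ normal tail, and checking that the remainder forms $r_N^{[\beta]}(\delta;\cdot,\cdot)$ are bounded uniformly in $\delta$ --- which holds because $J_{\delta,\beta}^{-1}$ and $J_{\delta,\beta}^{-2}$ are uniformly bounded for small $\delta$.
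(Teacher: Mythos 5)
Your proposal follows essentially the same route as the paper's proof: a variational (energy) estimate for the residual of the truncated ansatz, cancellation of the low-order powers of $\delta$ in the thin layer via the hierarchy (\ref{23})--(\ref{26}) taken beyond the truncation order, a lift of the Dirichlet mismatches across $\Gamma_{\delta,1}$ and $\Gamma_{\delta,2}$ to obtain an admissible $H_{0}^{1}(\Omega)$ comparison function (this is exactly the role of the extension $\mathcal{P}R$ in Proposition \ref{prop1}), coercivity to convert the residual bound into an error bound of suboptimal order, and finally a triangle-inequality bootstrap with extra terms of the expansion to reach $\delta^{n+1}$ in the bulk and $\delta^{n+1/2}$ in the layer.

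The one step you should not wave through is the claim that lifting the $O(\delta^{N+1})$ Dirichlet jumps ``changes every norm by at most $O(\delta^{N+1})$''. An extension of a trace datum of size $\varepsilon$ supported in a layer of thickness $\delta$ necessarily has a normal derivative of order $\varepsilon\delta^{-1}$, so its $H^{1}(\Omega_{\delta})$ norm is of order $\varepsilon\,\delta^{-1}\cdot\delta^{1/2}=\varepsilon\,\delta^{-1/2}$, i.e.\ half a power worse than the trace size; this loss is visible in the paper's Proposition \ref{prop1}, where the trace remainders are $O(\delta^{n+1/2})$ on $\Gamma$ while the extension satisfies only $\left\Vert \mathcal{P}R\right\Vert _{H^{1}(\Omega_{\delta})}\leq c\delta^{n}$. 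Consequently your intermediate full-norm estimate is not automatically $c\delta^{N+1/2}$; with the bounds the paper actually proves it is $c\delta^{N-1/2}$ (estimate (\ref{40})), and then your bootstrap with a single extra term ($N=n+1$) yields only $\delta^{n+1/2}$ for the bulk norms, which is insufficient. The repair is immediate and is what the paper does: take two extra terms ($N=n+2$), so that the tail contributes $c\delta^{n+3/2}$ while the explicitly added terms $\delta^{n+1}u_{n+1}+\delta^{n+2}u_{n+2}$ contribute $c\delta^{n+1}$ in the bulk and $c\delta^{n+1/2}$ in the layer. With that adjustment, and granting the remainder estimates of Proposition \ref{prop1} (which you correctly reduce to the smoothness of the $u_{i,j}$ and $u_{e,j}$, and which the paper itself defers to the literature), your argument coincides with the paper's proof.
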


\begin{proof}
Since $f$ is $\mathcal{C}^{\infty},$ all terms in Expansions (\ref{13}),
(\ref{14}) and (\ref{16}) up to order $n+1$ may be obtained from Equations
(\ref{23})-(\ref{26}). Let us define the remainders $R_{D_{1},n},R_{D_{2}%
,n},R_{1,n}$ and $R_{2,n}$ of Taylor expansions in the normal variable with
respect to $\delta$ up to order $n$ of $u_{i,\delta|\Gamma_{\delta,1}%
}^{\left(  n\right)  },u_{e,\delta|\Gamma_{\delta,2}}^{\left(  n\right)
},u_{i,\delta}^{\left(  n\right)  }$ and $u_{e,\delta}^{\left(  n\right)  }$
respectively by
\begin{align}
R_{D_{1},n}  & :=u_{i,\delta/\Gamma_{\delta,1}}^{\left(  n\right)  }%
-\sum\limits_{j=0}^{n}\sum\limits_{l=0}^{n-j}\frac{(-1)^{l}\delta^{j+l}}%
{l!}p_{1}^{l}\partial_{\mathbf{n}}^{l}u_{i,j|\Gamma},\label{33}\\
R_{D_{2},n}  & :=u_{e,\delta/\Gamma_{\delta,2}}^{\left(  n\right)  }%
-\sum\limits_{j=0}^{n}\sum\limits_{l=0}^{n-j}\frac{\delta^{j+l}}{l!}p_{2}%
^{l}\partial_{\mathbf{n}}^{l}u_{e,j|\Gamma},\label{34}\\
R_{1,n}^{\left[  1\right]  }  & :=\left(  u_{i,\delta}^{\left(  n\right)
}\right)  ^{\left[  1\right]  }-\sum\limits_{j=0}^{n}\sum\limits_{l=0}%
^{n-j}\frac{(s_{1})^{l}\delta^{j+l}}{l!}p_{1}^{l}\partial_{\mathbf{n}}%
^{l}u_{i,j|\Gamma}:=\left(  u_{i,\delta}^{\left(  n\right)  }\right)
^{\left[  1\right]  }-\sum\limits_{j=0}^{n}\delta^{j}U_{i,j},\label{35}\\
R_{2,n}^{\left[  2\right]  }  & :=\left(  u_{e,\delta}^{\left(  n\right)
}\right)  ^{\left[  2\right]  }-\sum\limits_{j=0}^{n}\sum\limits_{l=0}%
^{n-j}\frac{\left(  s_{2}\right)  ^{l}\delta^{j+l}}{l!}p_{2}^{l}%
\partial_{\mathbf{n}}^{l}u_{e,j|\Gamma}:=\left(  u_{e,\delta}^{\left(
n\right)  }\right)  ^{\left[  2\right]  }-\sum\limits_{j=0}^{n}\delta
^{j}U_{e,j},\label{36}%
\end{align}
where $s_{\beta}\in I_{\beta}.$ We shall rely on the following proposition to
show the estimates of the remainders $R_{D_{\beta},n}$ and $R_{\beta,n}$. The
steps of the proof are very similar to those given in \cite[Section
5]{SchTor2010}. We refer the reader to this paper.
\end{proof}

\begin{proposition}
\label{prop1}There exists a constant $c>0,$ independent of $\delta,$ such as%
\begin{align*}
\left\Vert \nabla R_{\beta,n}\right\Vert _{L^{2}(\Omega_{\delta,\beta})}  &
\leq c\delta^{n+1/2},\\
\left\Vert \nabla_{\Gamma}^{(j)}R_{D_{\beta},n}\right\Vert _{L^{2}(\Gamma)}
&  \leq c\delta^{n+1/2},\ \ \text{for }j=0,1.
\end{align*}
Moreover, there exists an extension $\mathcal{P}R$ of $R_{D_{\beta},n}$ into
$\Omega_{\delta}$ with%
\[
\partial_{\eta_{\beta}}\widetilde{\mathcal{P}R}\left(  m,\eta_{\beta}\right)
_{|\eta_{\beta}=(-1)^{\beta}\delta}=0\text{ and\ }\left\Vert \mathcal{P}%
R\right\Vert _{H^{1}(\Omega_{\delta})}\leq c\delta^{n}.
\]

\end{proposition}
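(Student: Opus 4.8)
The plan is to estimate each remainder by comparing the truncated Taylor expansions in the scaled normal variable against their exact counterparts, then to control the residual using the explicit integral form of the Taylor remainder together with the elliptic regularity of the profile terms provided by Theorem \ref{theo1}. The starting observation is that $R_{\beta,n}$ and $R_{D_{\beta},n}$ are, by their very definitions in (\ref{33})--(\ref{36}), exactly the Taylor remainders at order $n$ of the smooth functions $x\mapsto u_{i,j}$ (resp.\ $u_{e,j}$) expanded in the normal direction about $\Gamma$. Since $f\in\mathcal{C}^{\infty}(\Omega)$, Theorem \ref{theo1} guarantees that each profile $u_{i,j},u_{e,j}$ lies in $\mathcal{C}^{m}(\overline{\Omega}_{i})\times\mathcal{C}^{m}(\overline{\Omega}_{e})$ for every $m$, so all the normal derivatives $\partial_{\mathbf{n}}^{l}u_{\cdot,j|\Gamma}$ appearing in the sums are well defined and bounded on $\Gamma$.

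\textbf{Step 1 (integral form of the remainder).} First I would write, for a fixed smooth profile $w$ defined near $\Gamma$, the exact Taylor–Lagrange identity
\[
w(m+\eta\mathbf{n}(m))-\sum_{l=0}^{k}\frac{\eta^{l}}{l!}\partial_{\mathbf{n}}^{l}w_{|\Gamma}(m)
=\frac{1}{k!}\int_{0}^{\eta}(\eta-\tau)^{k}\,\partial_{\mathbf{n}}^{k+1}w(m+\tau\mathbf{n}(m))\,d\tau,
\]
and substitute $\eta=\delta p_{\beta}s_{\beta}$. Summing these identities over $j$ and $l$ as dictated by (\ref{33})--(\ref{36}), every remainder becomes a finite sum of such integral terms, each carrying a factor $\delta^{n+1}$ multiplied by a bounded derivative of a fixed profile. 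Because $s_{\beta}$ ranges over the bounded interval $I_{\beta}$ and the profiles are $\mathcal{C}^{\infty}$ up to $\Gamma$, the pointwise $L^{\infty}$ bound on the integrand is $O(\delta^{n+1})$ uniformly in $m$ and $s_{\beta}$.

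\textbf{Step 2 (the $L^{2}$ norms on the thin layer).} Next I would pass to the scaled variables, where the volume element (\ref{3}) on $\Omega_{\delta,\beta}$ contributes a factor $p_{\beta}\delta\det J_{\delta,\beta}\,d\Gamma\,ds_{\beta}$. The gradient decomposition (\ref{2}) shows that $\nabla R_{\beta,n}$ has a tangential part, of size $O(\delta^{n+1})$, and a normal part $p_{\beta}^{-1}\delta^{-1}\partial_{s_{\beta}}$, of size $O(\delta^{n})$ after the derivative in $s_{\beta}$ eats one power of $\delta$. Squaring and integrating, the extra factor of $\delta$ from the volume element yields $\|\nabla R_{\beta,n}\|_{L^{2}(\Omega_{\delta,\beta})}=O(\delta^{n+1/2})$; the boundary traces $R_{D_{\beta},n}$, living on $\Gamma$ with no such normal-derivative loss, obey $\|\nabla_{\Gamma}^{(j)}R_{D_{\beta},n}\|_{L^{2}(\Gamma)}=O(\delta^{n+1/2})$ for $j=0,1$, the tangential derivatives commuting with the normal Taylor expansion because $\Gamma$ is smooth.

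\textbf{Step 3 (the extension $\mathcal{P}R$).} Finally, to build the extension of $R_{D_{\beta},n}$ into all of $\Omega_{\delta}$ satisfying the vanishing Neumann trace at $\eta_{\beta}=(-1)^{\beta}\delta$, I would take an explicit one-dimensional lifting in the normal variable — for instance a low-degree polynomial in $\eta_{\beta}$ whose value matches $R_{D_{\beta},n}$ on $\Gamma_{\delta,\beta}$ and whose normal derivative is engineered to vanish at the prescribed face — and then undo the scaling. The factor $\delta^{n}$ (rather than $\delta^{n+1/2}$) in the bound $\|\mathcal{P}R\|_{H^{1}(\Omega_{\delta})}\le c\delta^{n}$ is exactly what the integration of the one-dimensional profile over a layer of width $\delta$ produces. \textbf{The main obstacle} I anticipate is the bookkeeping in Step 2: carefully tracking how each power of $\delta$ is gained from the Taylor remainder and lost from the $\delta^{-1}$ in the normal gradient, so that the normal derivative $\partial_{s_{\beta}}R_{\beta,n}$ really is $O(\delta^{n})$ and not $O(\delta^{n-1})$ — this is where the precise structure of the double sum in (\ref{35})--(\ref{36}), in which the index $l$ couples the power of $s_{\beta}$ to the power of $\delta$, must be used rather than a crude term-by-term estimate. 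Since this is precisely the computation carried out in \cite[Section 5]{SchTor2010}, I would organize the argument to parallel theirs and refer the reader there for the remaining routine verifications.
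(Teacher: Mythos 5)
Your proposal is correct and follows essentially the same route the paper intends: the paper itself gives no details and simply defers to \cite[Section 5]{SchTor2010}, and your sketch (integral-form Taylor remainders in the scaled normal variable, the $\delta^{1/2}$ gain from the volume element against the $\delta^{-1}$ loss in the normal component of the gradient, and a one-dimensional normal lifting with prescribed vanishing Neumann trace for $\mathcal{P}R$) is precisely that standard argument made explicit. The only minor imprecision is that the trace remainders $R_{D_{\beta},n}$ are in fact $O(\delta^{n+1})$ in $L^{2}(\Gamma)$, which of course still yields the stated $\delta^{n+1/2}$ bound.
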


\begin{proof}
[Continuation of the proof of Theorem \ref{theo2}]Let $r_{i,\delta}%
^{n},\ r_{d,\delta}^{n}$ and $\ r_{e,\delta}^{n}$ be the remainders made by
truncating Series (\ref{13}), (\ref{14}) and (\ref{16})%
\[
r_{i,\delta}^{n}:=u_{i,\delta}-u_{i,\delta}^{\left(  n\right)  }%
,\ r_{e,\delta}^{n}:=u_{e,\delta}-u_{e,\delta}^{\left(  n\right)
},\ r_{d,\delta}^{n}:=u_{d,\delta}-u_{d,\delta}^{\left(  n\right)  },
\]
and $\mathcal{L}_{\delta}$ be the linear form defined on $H_{0}^{1}(\Omega) $
\begin{align}
\mathcal{L}_{\delta}v &  :=\alpha_{i}\int_{\Omega_{i,\delta}}\nabla
r_{i,\delta}^{n}.\nabla v_{i}\ d\Omega_{i,\delta}+\alpha_{\delta}\int
_{\Omega_{\delta}}\nabla(r_{d,\delta}^{n}-\mathcal{P}R).\nabla v_{d}%
\ d\Omega_{\delta}\nonumber\\
&  +\alpha_{e}\int_{\Omega_{e,\delta}}\nabla r_{e,\delta}^{n}.\nabla
v_{e}\ d\Omega_{e,\delta},\label{36.1}%
\end{align}
in which $\mathcal{P}R$ is the extension function of $R_{D_{\beta},n}$ into
$\Omega_{\delta}.$ Using Green's formula in $\Omega_{i}$ and in $\Omega_{e}$
with the help of (\ref{15}), we obtain
\begin{align*}
\mathcal{L}_{\delta}v &  =-\alpha_{i}\int_{\Gamma}\left(  \partial
_{\mathbf{n}}u_{i,0|\Gamma}+\cdots+\delta^{n}\partial_{\mathbf{n}%
}u_{i,n|\Gamma}\right)  v_{i|\Gamma}\ d\Gamma\\
&  -\sum_{\beta=1}^{2}\left[  \alpha_{\delta}\delta a_{\delta}^{\left[
\beta\right]  }(u_{0}^{\left[  \beta\right]  }+\cdots+\delta^{n}u_{n}^{\left[
\beta\right]  },v^{\left[  \beta\right]  })\right] \\
&  +\alpha_{i}\int_{\Omega_{\delta,1}}\nabla u_{i,\delta}^{\left(  n\right)
}.\nabla v_{1}\ d\Omega_{\delta,1}+\alpha_{e}\int_{\Omega_{\delta,2}}\nabla
u_{e,\delta}^{\left(  n\right)  }.\nabla v_{2}\ d\Omega_{\delta,2}\\
&  +\alpha_{e}\int_{\Gamma}\left(  \partial_{\mathbf{n}}u_{e,0|\Gamma}%
+\cdots+\delta^{n}\partial_{\mathbf{n}}u_{e,n|\Gamma}\right)  v_{e|\Gamma
}\ d\Gamma-\alpha_{\delta}\int_{\Omega_{\delta}}\nabla\mathcal{P}R.\nabla
v_{d}\ d\Omega_{\delta}.
\end{align*}
It follows, from (\ref{33})-(\ref{36}), that%
\begin{align*}
\mathcal{L}_{\delta}v &  =\alpha_{i}\int_{\Omega_{i,\delta}}\nabla
R_{i,n}.\nabla v_{i}\ d\Omega_{i,\delta}+\alpha_{e}\int_{\Omega_{e,\delta}%
}\nabla R_{e,n}.\nabla v_{e}\ d\Omega_{e,\delta}\\
&  -\alpha_{\delta}\int_{\Omega_{\delta}}\nabla\mathcal{P}R.\nabla
v_{d}\ d\Omega_{\delta}-\sum_{\beta=1}^{2}\alpha_{\delta}\delta a_{\delta
}^{\left[  \beta\right]  }(u_{0}^{\left[  \beta\right]  }+\cdots+\delta
^{n}u_{n}^{\left[  \beta\right]  },v^{\left[  \beta\right]  })\\
&  +\alpha_{i}\delta a_{\delta}^{\left[  1\right]  }(U_{i,0}+\cdots+\delta
^{n}U_{i,n},v^{\left[  1\right]  })+\alpha_{e}\delta a_{\delta}^{\left[
2\right]  }(U_{e,0}+\cdots+\delta^{n}U_{e,n},v^{\left[  2\right]  })\\
&  -\alpha_{i}\int_{\Gamma}\left(  \partial_{\mathbf{n}_{\delta,1}%
}u_{i,0|\Gamma}+\cdots+\delta^{n}\partial_{\mathbf{n}}u_{i,n|\Gamma}\right)
v_{i|\Gamma}\ d\Gamma\\
&  +\alpha_{e}\int_{\Gamma}\left(  \partial_{\mathbf{n}}u_{e,0|\Gamma}%
+\cdots+\delta^{n}\partial_{\mathbf{n}}u_{e,n|\Gamma}\right)  v_{e|\Gamma
}\ d\Gamma.
\end{align*}
Now, we use the fact that $u_{0}^{\left[  \beta\right]  },\ldots
,u_{n+1}^{\left[  \beta\right]  },$ $(\beta=1,2)$ are solutions of Equations
(\ref{23})-(\ref{26}), we obtain%
\begin{align*}
\mathcal{L}_{\delta}v &  =\alpha_{\delta}\delta^{n+1}\sum_{\beta=1}%
^{2}\left\{  \delta^{-1}a_{0,2}^{\left[  \beta\right]  }\left(  u_{n+1}%
^{\left[  \beta\right]  },v^{\left[  \beta\right]  }\right)  -\left(
a_{2,2}^{\left[  \beta\right]  }+a_{0,1}^{\left[  \beta\right]  }\right)
\left(  u_{n}^{\left[  \beta\right]  },v^{\left[  \beta\right]  }\right)
\right. \\
&  -a_{1,1}^{\left[  \beta\right]  }\left(  u_{n-1}^{\left[  \beta\right]
}+\delta u_{n}^{\left[  \beta\right]  },v^{\left[  \beta\right]  }\right)
-a_{2,1}^{\left[  \beta\right]  }\left(  u_{n-2}^{\left[  \beta\right]
}+\delta u_{n-1}^{\left[  \beta\right]  }+\delta^{2}u_{n}^{\left[
\beta\right]  },v^{\left[  \beta\right]  }\right)  -\cdots\\
&  \left.  -a_{n-1,1}^{\left[  \beta\right]  }\left(  u_{1}^{\left[
\beta\right]  }+\cdots+\delta^{n-1}u_{n-1}^{\left[  \beta\right]  },v^{\left[
\beta\right]  }\right)  -r_{n}^{\left[  \beta\right]  }\left(  \delta
;u_{1}^{\left[  \beta\right]  }+\cdots+\delta^{n}u_{n}^{\left[  \beta\right]
},v^{\left[  \beta\right]  }\right)  \right\} \\
&  +\alpha_{i}\int_{\Omega_{i,\delta}}\nabla R_{i,n}.\nabla v_{i}%
\ d\Omega_{i,\delta}+\alpha_{e}\int_{\Omega_{e,\delta}}\nabla R_{e,n}.\nabla
v_{e}\ d\Omega_{e,\delta}-\alpha_{\delta}\int_{\Omega_{\delta}}\nabla
\mathcal{P}R.\nabla v_{d}\ d\Omega_{\delta}.
\end{align*}
By the estimates based on the explicit expressions of the bilinear form
$a_{k,l}^{\left[  \beta\right]  }(.,.)$ and those of Propositions \ref{prop1},
we have%
\begin{align*}
\left\vert \mathcal{L}_{\delta}v\right\vert  &  \leq c\delta^{n+1}\sum
_{\beta=1}^{2}\left(  \left\Vert \nabla_{\Gamma}v^{\left[  \beta\right]
}\right\Vert _{L^{2}(\Omega^{\beta})}+\delta^{-1}\left\Vert \partial
_{s_{\beta}}v^{\left[  \beta\right]  }\right\Vert _{L^{2}(\Omega^{\beta}%
)}+\left\Vert v^{\left[  \beta\right]  }\right\Vert _{L^{2}(\Omega^{\beta}%
)}\right) \\
&  +c\delta^{n-1/2}\left(  \left\Vert v_{i}\right\Vert _{H^{1}(\Omega
_{i,\delta})}+\sum_{\beta=1}^{2}\left\Vert v_{\beta}\right\Vert _{H^{1}%
(\Omega_{\delta,\beta})}+\left\Vert v_{e}\right\Vert _{H^{1}(\Omega_{e,\delta
})}\right)  .
\end{align*}
Since $\delta$ is small enough, we have, $\forall v\in H_{0}^{1}(\Omega),$%
\begin{align*}
\left\vert \mathcal{L}_{\delta}v\right\vert  &  \leq c\delta^{n+\frac{1}{2}%
}\sum_{\beta=1}^{2}\left(  \delta^{\frac{1}{2}}\left\Vert \nabla_{\Gamma
}v^{\left[  \beta\right]  }\right\Vert _{L^{2}(\Omega^{\beta})}+\delta
^{\frac{-1}{2}}\left\Vert \partial_{s_{\beta}}v^{\left[  \beta\right]
}\right\Vert _{L^{2}(\Omega^{\beta})}+\delta^{\frac{1}{2}}\left\Vert
v^{\left[  \beta\right]  }\right\Vert _{L^{2}(\Omega^{\beta})}\right) \\
&  +c\delta^{n-1/2}\left(  \left\Vert v_{i}\right\Vert _{H^{1}(\Omega
_{i,\delta})}+\sum_{\beta=1}^{2}\left\Vert v_{\beta}\right\Vert _{H^{1}%
(\Omega_{\delta,\beta})}+\left\Vert v_{e}\right\Vert _{H^{1}(\Omega_{e,\delta
})}\right)  .
\end{align*}
Therefore
\begin{equation}
\left\vert \mathcal{L}_{\delta}v\right\vert \leq c\delta^{n-\frac{1}{2}%
}\left\Vert v\right\Vert _{H^{1}(\Omega)}.\label{37}%
\end{equation}
Since $r^{n}:=\left(  r_{i,\delta}^{n},r_{d,\delta}^{n}-\mathcal{P}%
R,r_{e,\delta}^{n}\right)  $ is in $H_{0}^{1}(\Omega),$ we set in (\ref{36.1})
$v=r^{n},$ we obtain%
\[
\left\Vert r_{i,\delta}^{n}\right\Vert _{H^{1}(\Omega_{i,\delta})}+\left\Vert
r_{d,\delta}^{n}-\mathcal{P}R\right\Vert _{H^{1}(\Omega_{\delta})}+\left\Vert
r_{e,\delta}^{n}\right\Vert _{H^{1}(\Omega_{e,\delta})}\overset{(\ref{37}%
)}{\leq}c\delta^{n-\frac{1}{2}}.
\]
Thanks to Proposition \ref{prop1}, we find
\begin{equation}
\left\Vert r_{i,\delta}^{n}\right\Vert _{H^{1}(\Omega_{i,\delta})}+\left\Vert
r_{d,\delta}^{n}\right\Vert _{H^{1}(\Omega_{\delta})}+\left\Vert r_{e,\delta
}^{n}\right\Vert _{H^{1}(\Omega_{e,\delta})}\leq c\delta^{n-\frac{1}{2}%
}.\label{40}%
\end{equation}
Moreover, since $f_{i}$ and $f_{e}$ are $\mathcal{C}^{\infty}$ and for every
integer $j$, we have $\left\Vert u_{e,j}\right\Vert _{H^{1}(\Omega_{e,\delta
})}=O(1),$ $\left\Vert u_{i,j}\right\Vert _{H^{1}(\Omega_{i,\delta})}=O(1)$
and $\left\Vert u_{d_{\beta},j}\right\Vert _{H^{1}(\Omega_{\delta,\beta}%
)}=O(\delta^{-1/2})$, therefore (see \cite{tord-phd})%
\begin{gather*}
\left\Vert r_{i,\delta}^{n}\right\Vert _{H^{1}(\Omega_{i,\delta})}=\left\Vert
\delta^{n+1}u_{i,n+1}+\delta^{n+2}u_{i,n+2}+r_{i,\delta}^{n+2}\right\Vert
_{H^{1}(\Omega_{i,\delta})}\\
\overset{(\ref{40})}{\leq}c\delta^{n+1}+c\delta^{n+2}+c\delta^{n+3/2}\leq
c\delta^{n+1},\\
\left\Vert r_{e,\delta}^{n}\right\Vert _{H^{1}(\Omega_{e,\delta})}=\left\Vert
\delta^{n+1}u_{e,n+1}+\delta^{n+2}u_{e,n+2}+r_{e,\delta}^{n+2}\right\Vert
_{H^{1}(\Omega_{e,\delta})}\\
\overset{(\ref{40})}{\leq}c\delta^{n+1}+c\delta^{n+2}+c\delta^{n+3/2}\leq
c\delta^{n+1},\\
\left\Vert r_{d,\delta}^{n}\right\Vert _{H^{1}(\Omega_{\delta})}=\left\Vert
r_{d,\delta}^{n+1}+\delta^{n+1}u_{d,n+1}\right\Vert _{H^{1}(\Omega_{\delta}%
)}\overset{(\ref{40})}{\leq}c\delta^{n+1/2}+c\delta^{n+1/2}\\
\leq c\delta^{n+1/2}.
\end{gather*}
This completes the proof.
\end{proof}

\section{Approximate transmission conditions}

This section is devoted to the approximation of $u_{\delta}$ by a solution of
a problem modelling the effect of the thin layer with a precision of order two
in $\delta.$ We truncate the series defining the asymptotic expansions,
keeping only the first two terms%
\begin{align*}
u_{i,\delta} &  \simeq u_{i,\delta}^{\left(  1\right)  }=u_{i,0}+\delta
u_{i,1}\ \ \text{in }\Omega_{i},\\
u_{e,\delta} &  \simeq u_{e,\delta}^{\left(  1\right)  }=u_{e,0}+\delta
u_{e,1}\text{ \ in }\Omega_{e},\\
u_{d_{1},\delta}(x) &  \simeq u_{d_{1},\delta}^{\left(  1\right)  }%
(m,s_{1})=u_{0}^{\left[  1\right]  }(m,s_{1})+\delta u_{1}^{\left[  1\right]
}(m,s_{1}),\ \forall x=\Phi_{1}(m,s_{1})\in\Omega_{\delta,1},\\
u_{d_{2},\delta}(x) &  \simeq u_{d_{2},\delta}^{\left(  1\right)  }%
(m,s_{2}):=u_{0}^{\left[  2\right]  }(m,s_{2})+\delta u_{1}^{\left[  2\right]
}(m,s_{2}),\ \forall x=\Phi_{2}(m,s_{2})\in\Omega_{\delta,2},
\end{align*}
where $U_{\delta}^{\left(  1\right)  }:=\left(  u_{i,\delta}^{\left(
1\right)  },u_{e,\delta}^{\left(  1\right)  }\right)  $ is the solution of%
\begin{equation}
\left\{
\begin{array}
[c]{ll}%
-div\left(  \alpha_{i}\nabla u_{i,\delta}^{\left(  1\right)  }\right)
=f_{|\Omega_{i}} & \text{in }\Omega_{i},\\
-div\left(  \alpha_{e}\nabla u_{e,\delta}^{\left(  1\right)  }\right)
=f_{|\Omega_{e}} & \text{in }\Omega_{e},\\
u_{i,\delta|\Gamma}^{\left(  1\right)  }-u_{e,\delta|\Gamma}^{\left(
1\right)  }=\delta\mathcal{A}\left(  u_{i,\delta}^{\left(  1\right)  }\right)
-\delta^{2}\xi_{\delta} & \text{on }\Gamma,\\
\alpha_{i}\partial_{\mathbf{n}}u_{i,\delta|\Gamma}^{\left(  1\right)  }%
-\alpha_{e}\partial_{\mathbf{n}}u_{e,\delta|\Gamma}^{\left(  1\right)
}=\delta\mathcal{B}\left(  u_{i,\delta}^{\left(  1\right)  }\right)
-\delta^{2}\rho_{\delta} & \text{on}\ \Gamma,\\
U_{\delta|\partial\Omega}^{\left(  1\right)  }=0 & \text{on }\partial\Omega,
\end{array}
\right.  \label{41}%
\end{equation}
with%
\begin{align*}
\mathcal{A}\left(  u\right)   &  :=\left[  p_{1}(1-\alpha_{i}\alpha_{\delta
}^{-1})+p_{2}(\alpha_{i}\alpha_{e}^{-1}-\alpha_{i}\alpha_{\delta}%
^{-1})\right]  \left(  \partial_{\mathbf{n}}u_{|\Gamma}\right)  ,\\
\mathcal{B}\left(  u\right)   &  :=\left[  p_{1}(\alpha_{\delta}-\alpha
_{i})+p_{2}(\alpha_{\delta}-\alpha_{e})\right]  \Delta_{\Gamma}u_{|\Gamma},\\
\xi_{\delta} &  :=\left[  p_{1}(1-\alpha_{i}\alpha_{\delta}^{-1})+p_{2}%
(\alpha_{i}\alpha_{e}^{-1}-\alpha_{i}\alpha_{\delta}^{-1})\right]
\partial_{\mathbf{n}}u_{i,1|\Gamma},\\
\rho_{\delta} &  :=\left[  p_{1}(\alpha_{\delta}-\alpha_{i})+p_{2}%
(\alpha_{\delta}-\alpha_{e})\right]  \Delta_{\Gamma}u_{i,1|\Gamma}.
\end{align*}
Let $U_{\delta}^{ap}:=\left(  u_{i,\delta}^{ap},u_{e,\delta}^{ap}\right)  $ be
the solution of (\ref{41}) with $\rho_{\delta}=0$ and $\xi_{\delta}=0$. We
obtain a problem $\left(  \mathcal{P}_{\delta}^{ap}\right)  $ with
transmission conditions of order equal to that of the differential operator.
The new transmission conditions on $\Gamma$ are defined by
\begin{equation}
\left\{
\begin{array}
[c]{l}%
u_{i,\delta|\Gamma}^{ap}-u_{e,\delta|\Gamma}^{ap}=\delta\left[  p_{1}%
(1-\alpha_{i}\alpha_{\delta}^{-1})+p_{2}(\alpha_{i}\alpha_{e}^{-1}-\alpha
_{i}\alpha_{\delta}^{-1})\right]  \partial_{\mathbf{n}}u_{i,\delta|\Gamma
}^{ap},\\
\alpha_{i}\partial_{\mathbf{n}}u_{i,\delta|\Gamma}^{ap}-\alpha_{e}%
\partial_{\mathbf{n}}u_{e,\delta|\Gamma}^{ap}=\delta\left[  p_{1}%
(\alpha_{\delta}-\alpha_{i})+p_{2}(\alpha_{\delta}-\alpha_{e})\right]
\Delta_{\Gamma}u_{i,\delta|\Gamma}^{ap}.
\end{array}
\right.  \label{42}%
\end{equation}
However, the bilinear form associated to Problem $\left(  \mathcal{P}_{\delta
}^{ap}\right)  $ is neither positive nor negative. Then the existence and
uniqueness of the solution are not ensured by the Lax--Milgram lemma.
Therefore, we reformulate Problem $\left(  \mathcal{P}_{\delta}^{ap}\right)  $
into a nonlocal equation on the interface $\Gamma$ (cf. \cite{bonnvial}). A
direct use of transmission conditions (\ref{42}) leads to an operator which is
not self-adjoint. So, we choose the position of $\Gamma$ in such a way that
the jump of the trace of the solution on $\Gamma$ is null. We put
\[
p_{1}(1-\alpha_{i}\alpha_{\delta}^{-1})+p_{2}(\alpha_{i}\alpha_{e}^{-1}%
-\alpha_{i}\alpha_{\delta}^{-1})=0,
\]
we obtain%
\[
p_{1}=\frac{\alpha_{i}\left(  \alpha_{e}-\alpha_{\delta}\right)  }%
{\alpha_{\delta}\left(  \alpha_{e}-\alpha_{i}\right)  }\text{ and }p_{2}%
=\frac{\alpha_{e}\left(  \alpha_{\delta}-\alpha_{i}\right)  }{\alpha_{\delta
}\left(  \alpha_{e}-\alpha_{i}\right)  },
\]
which is valid only when $\alpha_{i}<\alpha_{\delta}<\alpha_{e}$ or
$\alpha_{e}<\alpha_{\delta}<\alpha_{i}.$ This corresponds to the case of
mid-diffusion. Transmission conditions (\ref{42}) become
\[
\left\{
\begin{array}
[c]{l}%
u_{i,\delta|\Gamma}^{ap}-u_{e,\delta|\Gamma}^{ap}=0,\\
\alpha_{i}\partial_{\mathbf{n}}u_{i,\delta|\Gamma}^{ap}-\alpha_{e}%
\partial_{\mathbf{n}}u_{e,\delta|\Gamma}^{ap}=\delta\dfrac{\left(  \alpha
_{e}-\alpha_{\delta}\right)  \left(  \alpha_{i}-\alpha_{\delta}\right)
}{\alpha_{\delta}}\Delta_{\Gamma}u_{i,\delta}^{ap}.
\end{array}
\right.
\]
After, we remove the right-hand side of Problem $\left(  \mathcal{P}_{\delta
}^{ap}\right)  $ by a standard lift: let $G$ be in $H_{0}^{1}(\Omega)$ such
that $-div\left(  \alpha\nabla G\right)  =f.$ Then the function $\Psi
=U_{\delta}^{ap}-G$ solves the following problem%
\[
\left\{
\begin{array}
[c]{ll}%
-div\left(  \alpha_{i}\nabla\Psi_{i}\right)  =0 & \text{in }\Omega_{i},\\
-div\left(  \alpha_{e}\nabla\Psi_{e}\right)  =0 & \text{in }\Omega_{e},\\
\Psi_{i|\Gamma}-\Psi_{e|\Gamma}=0 & \text{on }\Gamma,\\
\alpha_{i}\partial_{\mathbf{n}}\Psi_{i|\Gamma}-\alpha_{e}\partial_{\mathbf{n}%
}\Psi_{e|\Gamma}-\delta\dfrac{\left(  \alpha_{e}-\alpha_{\delta}\right)
\left(  \alpha_{i}-\alpha_{\delta}\right)  }{\alpha_{\delta}}\Delta_{\Gamma
}\Psi_{i|\Gamma}=g & \text{on}\ \Gamma,\\
\Psi_{e|\partial\Omega}=0 & \text{on }\partial\Omega,
\end{array}
\right.
\]
where $g=\left(  \alpha_{e}-\alpha_{i}\right)  \partial_{\mathbf{n}}%
G_{|\Gamma}+\delta\dfrac{\left(  \alpha_{e}-\alpha_{\delta}\right)  \left(
\alpha_{i}-\alpha_{\delta}\right)  }{\alpha_{\delta}}\Delta_{\Gamma}%
G_{|\Gamma}.$

We introduce the Steklov-Poicar\'{e} operators $S_{i}$ and $S_{e}$ (called
also Dirichlet-to-Neumann operators) defined from $H^{1/2}(\Gamma)$ onto
$H^{-1/2}(\Gamma)$ by $S_{i}\varphi:=\partial_{\mathbf{n}}u_{i|\Gamma},$ where
$u_{i}$ is the solution of the boundary value problem%
\[
\left\{
\begin{array}
[c]{ll}%
-\Delta u_{i}=0 & \text{in }\Omega_{i},\\
u_{i|\Gamma}=\varphi & \text{on }\Gamma,
\end{array}
\right.
\]
and by $S_{e}\psi:=\partial_{-\mathbf{n}}u_{e|\Gamma},$ where $u_{e}$ is the
solution of the boundary value problem
\[
\left\{
\begin{array}
[c]{ll}%
-\Delta u_{e}=0 & \text{in }\Omega_{e},\\
u_{e|\Gamma}=\psi & \text{on }\Gamma,\\
u_{e|\partial\Omega}=0 & \text{on }\partial\Omega.
\end{array}
\right.
\]
Then $\left(  \mathcal{P}_{\delta}^{ap}\right)  $ is equivalent to the
boundary equation
\[
\alpha_{i}S_{i}\omega+\alpha_{e}S_{e}\omega-\delta\dfrac{\left(  \alpha
_{e}-\alpha_{\delta}\right)  \left(  \alpha_{i}-\alpha_{\delta}\right)
}{\alpha_{\delta}}\Delta_{\Gamma}\omega=g\text{ on }\Gamma,
\]
where $\omega$ is the trace of $\Psi$ on the surface $\Gamma.$ We are now in
position to state the existence and uniqueness theorem, which proof is similar
to that of Theorem 2.5 in \cite{bonnvial}.

\begin{theorem}
\label{theo3}The operator $P_{\delta}:=\delta\dfrac{\left(  \alpha_{e}%
-\alpha_{\delta}\right)  \left(  \alpha_{i}-\alpha_{\delta}\right)  }%
{\alpha_{\delta}}\Delta_{\Gamma}-\alpha_{i}S_{i}-\alpha_{e}S_{e}$ is an
elliptic self-adjoint semi-bounded from below pseudodifferential operator of
order 2. Moreover, there exists series $\left(  \lambda_{n}\right)
_{n\in\mathbb{N}}$ growing to infinity such that for any $F\in H^{s}\left(
\Gamma\right)  $ with $s\in\mathbb{R}$, we have the following:

\begin{enumerate}
\item If $\ 0\notin\left(  \lambda_{n}\right)  _{n\in\mathbb{N}},$ then
equation $-P_{\delta}\omega=g$ admits a unique solution in $\mathcal{S}%
^{\prime}\left(  \Gamma\right)  $ which, in addition, belongs to
$H^{s+2}\left(  \Gamma\right)  ;$

\item If $\ 0\in\left(  \lambda_{n}\right)  _{n\in\mathbb{N}},$ then there is
either no solution or a complete affine finite dimensional space of
$H^{s+2}\left(  \Gamma\right)  $ solutions.
\end{enumerate}
\end{theorem}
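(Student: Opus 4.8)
The plan is to establish in turn the four structural properties of $P_\delta$ asserted in the first sentence---that it is a pseudodifferential operator of order $2$, elliptic, self-adjoint, and bounded from below---and then to read off the solvability dichotomy from the spectral theory of such an operator on the closed surface $\Gamma$.

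First I would determine the order and the principal symbol. On $\Gamma$ the Laplace--Beltrami operator $\Delta_\Gamma=\mathrm{div}_\Gamma\nabla_\Gamma$ is a classical pseudodifferential operator of order $2$ with principal symbol $-|\xi|^2$ (the length being taken in the metric induced on $\Gamma$), while the Steklov--Poincar\'e operators $S_i$ and $S_e$ are classical pseudodifferential operators of order $1$, as is standard for Dirichlet-to-Neumann maps. Hence $P_\delta$ is of order $2$ and its principal part comes solely from the $\Delta_\Gamma$ term; writing $C_\delta:=\delta(\alpha_e-\alpha_\delta)(\alpha_i-\alpha_\delta)/\alpha_\delta$, its principal symbol is
\[
\sigma_2(P_\delta)(m,\xi)=-C_\delta\,|\xi|^2 .
\]
It is here that the mid-diffusion hypothesis is decisive: under either $\alpha_i<\alpha_\delta<\alpha_e$ or $\alpha_e<\alpha_\delta<\alpha_i$ the factors $\alpha_e-\alpha_\delta$ and $\alpha_i-\alpha_\delta$ carry opposite signs, so $C_\delta<0$ and $\sigma_2(P_\delta)=|C_\delta|\,|\xi|^2>0$ for $\xi\neq0$. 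Thus $P_\delta$ is elliptic of order $2$ with a \emph{positive} principal symbol.

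Next I would treat self-adjointness and the lower bound through the quadratic form. The operator $\Delta_\Gamma$ is self-adjoint, and each Steklov--Poincar\'e operator is self-adjoint since Green's formula yields $\langle S_i\varphi,\psi\rangle_\Gamma=\int_{\Omega_i}\nabla u_i\cdot\nabla v_i=\langle\varphi,S_i\psi\rangle_\Gamma$, where $u_i,v_i$ denote the harmonic extensions of $\varphi,\psi$, and likewise for $S_e$; the same computation with $\psi=\varphi$ shows $\langle S_i\varphi,\varphi\rangle_\Gamma=\|\nabla u_i\|_{L^2(\Omega_i)}^2\geq0$. Therefore $P_\delta$ is symmetric, and being elliptic of order $2$ on the compact manifold $\Gamma$ it is self-adjoint on the domain $H^2(\Gamma)$. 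For the lower bound one computes
\[
\langle P_\delta\varphi,\varphi\rangle_\Gamma=|C_\delta|\,\|\nabla_\Gamma\varphi\|_{L^2(\Gamma)}^2-\alpha_i\|\nabla u_i\|_{L^2(\Omega_i)}^2-\alpha_e\|\nabla u_e\|_{L^2(\Omega_e)}^2 ,
\]
in which the two negative terms are of order one in $\varphi$ and are dominated at high frequency by the positive order-two term; equivalently, the positivity of $\sigma_2(P_\delta)$ and G{\aa}rding's inequality give $\langle P_\delta\varphi,\varphi\rangle_\Gamma\geq c\|\varphi\|_{H^1(\Gamma)}^2-C\|\varphi\|_{L^2(\Gamma)}^2\geq-C\|\varphi\|_{L^2(\Gamma)}^2$, which is precisely semi-boundedness from below.

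Finally I would read off the conclusion from spectral theory. An elliptic self-adjoint operator of order $2$ on the compact surface $\Gamma$ has compact resolvent; being bounded from below, its spectrum is a discrete sequence $(\lambda_n)_{n\in\mathbb{N}}$ of real eigenvalues of finite multiplicity with $\lambda_n\to+\infty$, and there is a complete orthonormal basis of smooth eigenfunctions. The equation $-P_\delta\omega=g$, i.e.\ $P_\delta\omega=-g$, is then handled by the Fredholm alternative. If $0\notin(\lambda_n)_{n\in\mathbb{N}}$ then $P_\delta$ is invertible, so the equation has a unique solution in $\mathcal{S}'(\Gamma)$, and the order-two elliptic regularity of $P_\delta$ promotes $g\in H^s(\Gamma)$ to $\omega\in H^{s+2}(\Gamma)$. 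If $0\in(\lambda_n)_{n\in\mathbb{N}}$ then $\ker P_\delta$ is a nonzero finite-dimensional eigenspace and, since $P_\delta=P_\delta^{*}$, the range of $P_\delta$ is its orthogonal complement; hence the equation has no solution when $g$ is not orthogonal to $\ker P_\delta$ and otherwise a full affine space of solutions of dimension $\dim\ker P_\delta$, again lying in $H^{s+2}(\Gamma)$ by elliptic regularity. The main obstacle is entirely concentrated in the symbol computation of the second paragraph: the sign $C_\delta<0$ forced by the mid-diffusion assumption is what converts an a priori indefinite problem into an elliptic, semi-bounded one, after which the argument follows the template of Theorem~2.5 in \cite{bonnvial}.
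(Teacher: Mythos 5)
Your proof is correct and follows essentially the route the paper intends: the paper gives no argument of its own but defers to Theorem~2.5 of \cite{bonnvial}, whose proof is exactly this combination of symbol analysis (the mid-diffusion sign condition making the order-2 principal symbol positive, hence $P_\delta$ elliptic), positivity of the Steklov--Poincar\'e forms via Green's formula, G{\aa}rding-type semi-boundedness, and the spectral/Fredholm dichotomy for an elliptic self-adjoint operator on the compact surface $\Gamma$. Your identification of $C_\delta<0$ as the decisive point is precisely why the paper restricts to the mid-diffusion regime.
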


Finally, we give an error estimate between the solution $u_{\delta}$ of
(\ref{1}) and the approximate solution $u_{\delta}^{ap}.$ Let us define
$u_{d_{\beta},\delta}^{ap}$ on $\Omega_{\delta,\beta}$
\begin{align*}
u_{d_{1},\delta}^{ap}\left(  x\right)   &  :=u_{d_{1},\delta}^{\left[
1\right]  ,ap}(m,s_{1}):=u_{i,\delta|\Gamma}^{ap}+\delta\frac{\alpha
_{i}\left(  \alpha_{e}-\alpha_{\delta}\right)  }{\alpha_{\delta}\left(
\alpha_{e}-\alpha_{i}\right)  }\left[  (s_{1}+1)\alpha_{i}\alpha_{\delta}%
^{-1}-1\right]  \partial_{\mathbf{n}}u_{i,\delta|\Gamma}^{ap},\\
u_{d_{2},\delta}^{ap}(x)  &  :=u_{d_{2},\delta}^{\left[  2\right]  ,ap}\left(
m,s_{2}\right)  :=u_{e,\delta|\Gamma}^{ap}+\delta\frac{\alpha_{e}\left(
\alpha_{\delta}-\alpha_{i}\right)  }{\alpha_{\delta}\left(  \alpha_{e}%
-\alpha_{i}\right)  }\left[  (s_{2}-1)\alpha_{e}\alpha_{\delta}^{-1}+1\right]
\partial_{\mathbf{n}}u_{e,\delta|\Gamma}^{ap},
\end{align*}
and let us denote by $u_{\delta}^{ap}$ the approximate solution defined on
$\Omega$
\[
u_{\delta}^{ap}:=\left\{
\begin{array}
[c]{cc}%
u_{i,\delta}^{ap} & \text{in }\Omega_{i,\delta},\\
u_{d_{\beta},\delta}^{ap} & \text{in }\Omega_{\delta,\beta},\\
u_{e,\delta}^{ap} & \text{in }\Omega_{e,\delta}.
\end{array}
\right.
\]
We can now formulate our main result.

\begin{theorem}
\label{theo4}There exists a constant $c$ independent of $\delta$ such as
\[
\left\Vert u_{i,\delta}-u_{i,\delta}^{ap}\right\Vert _{H^{1}\left(
\Omega_{i,\delta}\right)  }+\delta^{\frac{1}{2}}\sum\limits_{\beta=1}%
^{2}\left\Vert u_{d_{\beta},\delta}-u_{d_{\beta},\delta}^{ap}\right\Vert
_{H^{1}\left(  \Omega_{\delta,\beta}\right)  }+\left\Vert u_{e,\delta
}-u_{e,\delta}^{ap}\right\Vert _{H^{1}\left(  \Omega_{e,\delta}\right)  }\leq
c\delta^{2}.
\]

\end{theorem}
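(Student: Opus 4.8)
The plan is to interpose the two-term truncation $U_\delta^{(1)}=(u_{i,\delta}^{(1)},u_{e,\delta}^{(1)})$ and to split, in the weighted norm of the statement,
\[
\|u_\delta-u_\delta^{ap}\|\le\|u_\delta-u_\delta^{(1)}\|+\|u_\delta^{(1)}-u_\delta^{ap}\|.
\]
The first summand is $O(\delta^2)$ directly from the Convergence Theorem (Theorem \ref{theo2}) with $n=1$, including the $\delta^{1/2}$ weight on the membrane. Everything therefore reduces to estimating $u_\delta^{(1)}-u_\delta^{ap}$ on the three subdomains.

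On $\Omega_i$ and $\Omega_e$ the key point is that $U_\delta^{(1)}$ solves Problem (\ref{41}) by construction, whereas $U_\delta^{ap}$ solves it with $\xi_\delta=\rho_\delta=0$. After the mid-diffusion choice of $p_1,p_2$ the coefficient in $\mathcal A$ vanishes identically, hence $\xi_\delta=0$ and both solutions have zero trace jump on $\Gamma$; the only discrepancy is the flux source $\delta^2\rho_\delta$. Consequently $W:=U_\delta^{(1)}-U_\delta^{ap}$ is $\alpha_i$- resp. $\alpha_e$-harmonic in $\Omega_i,\Omega_e$, vanishes on $\partial\Omega$, is continuous across $\Gamma$, and obeys $\alpha_i\partial_\mathbf{n}W_i-\alpha_e\partial_\mathbf{n}W_e-\delta c_0\Delta_\Gamma W_{i|\Gamma}=-\delta^2\rho_\delta$ with $c_0=(\alpha_e-\alpha_\delta)(\alpha_i-\alpha_\delta)/\alpha_\delta$. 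Passing to the Steklov--Poincar\'e reduction introduced above, its trace $\omega:=W_{|\Gamma}$ solves the scalar boundary equation $-P_\delta\omega=-\delta^2\rho_\delta$ with the operator $P_\delta$ of Theorem \ref{theo3}.

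\textbf{The main obstacle} is to convert the mere solvability of Theorem \ref{theo3} into a bound $\|\omega\|_{H^{1/2}(\Gamma)}\le c\delta^2$ with $c$ \emph{independent of} $\delta$. The difficulty is structural: in the mid-diffusion regime $c_0<0$, so the leading surface term $-\delta c_0\Delta_\Gamma$ enters with the sign opposite to the positive order-one part, the associated form is indefinite (as already noted for $(\mathcal P_\delta^{ap})$), and the order-two contribution moreover degenerates as $\delta\to0$; a plain energy identity with $v=W$ leaves an uncontrollable $+\delta|c_0|\|\nabla_\Gamma\omega\|_{L^2(\Gamma)}^2$ on the wrong side. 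I would overcome this by writing $-P_\delta=A-\delta c_0\Delta_\Gamma$ with $A:=\alpha_iS_i+\alpha_eS_e$, a positive, self-adjoint, elliptic isomorphism of order one that is \emph{independent of} $\delta$, and by exploiting that the datum is not only small but smooth: since $\rho_\delta=c_0\Delta_\Gamma u_{i,1|\Gamma}$ with $u_{i,1}\in\mathcal C^\infty(\overline\Omega_i)$ by Theorem \ref{theo1}, it is $O(\delta^2)$ in \emph{every} $H^s(\Gamma)$. Recasting the equation as the fixed point $\omega=-\delta^2A^{-1}\rho_\delta+\delta c_0A^{-1}\Delta_\Gamma\omega$ and iterating, each application of $\delta c_0A^{-1}\Delta_\Gamma$ costs one factor $\delta$ while consuming only one order of the spare Sobolev regularity supplied by the $\mathcal C^\infty$ data; the leading iterate is $O(\delta^2)$ in $H^{1/2}(\Gamma)$ and the correction is $O(\delta^3)$, the limit being the genuine solution by Theorem \ref{theo3}. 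Elliptic regularity of the harmonic extensions then upgrades this to $\|W\|_{PH^1(\Omega)}\le c\delta^2$.

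\textbf{Transfer to the membrane.} On each $\Omega_{\delta,\beta}$ the functions $u_{d_\beta,\delta}^{(1)}$ and $u_{d_\beta,\delta}^{ap}$ are given by the \emph{same} affine-in-$s_\beta$ profiles, one assembled from the traces of $u_{i,0}+\delta u_{i,1}$ (resp. the exterior terms) and the other from the traces of $u_{i,\delta}^{ap}$ (resp. $u_{e,\delta}^{ap}$); their difference is thus governed by $\|W_{|\Gamma}\|_{H^{1/2}(\Gamma)}$ and $\|\partial_\mathbf{n}W_{|\Gamma}\|_{L^2(\Gamma)}$, both $O(\delta^2)$ by the previous step together with the trace and normal-derivative bounds of Theorem \ref{theo1}. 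Substituting these into the change-of-variables formula (\ref{4}) — whose anisotropic weights $\delta$ and $\delta^{-1}$ are precisely compensated by the prefactor $\delta^{1/2}$ in front of the membrane term — yields $\delta^{1/2}\sum_\beta\|u_{d_\beta,\delta}^{(1)}-u_{d_\beta,\delta}^{ap}\|_{H^1(\Omega_{\delta,\beta})}\le c\delta^2$. Combining the bulk and membrane contributions with the Convergence Theorem closes the triangle inequality and gives the announced estimate.
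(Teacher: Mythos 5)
Your overall architecture is the same as the paper's first move: split off $U_\delta^{(1)}$ by the triangle inequality, absorb $u_\delta-u_\delta^{(1)}$ with Theorem \ref{theo2} at $n=1$, and reduce to estimating $W:=U_\delta^{(1)}-U_\delta^{ap}$. Your identification of the problem solved by $W$ (piecewise harmonic, zero trace jump since the choice of $p_1,p_2$ kills $\mathcal A$ and $\xi_\delta$, flux jump $\delta c_0\Delta_\Gamma W_{i|\Gamma}-\delta^2\rho_\delta$ with $\rho_\delta$ smooth and bounded uniformly in every $H^s(\Gamma)$) is correct, and your treatment of the membrane term is fine \emph{given} a bound on $W$. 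The gap is in the central step, the passage from $-P_\delta\omega=-\delta^2\rho_\delta$ to $\left\Vert\omega\right\Vert_{H^{1/2}(\Gamma)}\le c\delta^2$ uniformly in $\delta$. The operator $\delta c_0A^{-1}\Delta_\Gamma$ is of order $+1$, so your ``fixed point'' map is not a contraction on any fixed $H^s(\Gamma)$ and the bootstrap is circular: to get $\left\Vert\omega\right\Vert_{H^{1/2}}\le c\delta^2$ you need $\left\Vert\omega\right\Vert_{H^{3/2}}\le c\delta$, for which you need $\left\Vert\omega\right\Vert_{H^{5/2}}\le c$, and the chain never closes without an a priori $\delta$-uniform polynomial bound on $\omega$ in \emph{some} Sobolev norm --- which is precisely what is in question. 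The smoothness of $\rho_\delta$ lets you write the formal series $\sum_{j\ge0}\bigl(\delta c_0A^{-1}\Delta_\Gamma\bigr)^j\bigl(-\delta^2A^{-1}\rho_\delta\bigr)$ with each term controlled, but nothing guarantees this series converges, let alone to the actual solution; the appeal to Theorem \ref{theo3} for ``the limit being the genuine solution'' is unfounded, since that theorem gives only fixed-$\delta$ solvability and explicitly admits the alternative $0\in(\lambda_n)$, i.e.\ values of $\delta$ at which $P_\delta$ is not invertible at all. So the structural obstacle you correctly flagged is not actually overcome by your argument.

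For comparison, the paper avoids the $\delta$-dependent pseudodifferential equation altogether at this stage: it expands $U_\delta^{ap}=\sum_{j\ge0}\delta^jw_j$, where each $w_j$ solves a $\delta$-\emph{independent} transmission problem with zero trace jump and flux-jump datum $c_0\Delta_\Gamma w_{i,j-1|\Gamma}$, each uniquely solvable and smooth by Theorem \ref{theo1}; it then checks $w_0=u_0$, $w_1=u_1$, and bounds the remainder $U_\delta^{ap}-w_0-\delta w_1-\delta^2w_2$ by $c\delta^2$ (following Vial's thesis). If you want to salvage your route, you would need a uniform resolvent-type estimate for $P_\delta$ on the orthogonal complement of the low modes, or equivalently to graft the paper's finite expansion of $U_\delta^{ap}$ onto your Steklov--Poincar\'e formulation; as written, the key bound $\left\Vert W\right\Vert_{PH^1(\Omega)}\le c\delta^2$ is not established.
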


\begin{proof}
According to the Convergence Theorem, it is sufficient to estimate the error
$U_{\delta}^{ap}-U_{\delta}^{(1)}$. Therefore, as in \cite{vialphd}, we
perform an asymptotic expansion for $U_{\delta}^{ap}$. The ansatz%
\begin{equation}
U_{\delta}^{ap}=\sum\limits_{j\geq0}\delta^{j}w_{j},\label{46}%
\end{equation}
where $w_{j|\Omega_{e}}:=w_{e,j}$ and $w_{j|\Omega_{i}}:=w_{i,j}$, gives the
recurrence relations%
\[
\left\{
\begin{array}
[c]{ll}%
-div\left(  \alpha_{i}\nabla w_{i,j}\right)  =f_{|\Omega_{i}}\delta_{j,0} &
\text{in }\Omega_{i},\\
-div\left(  \alpha_{e}\nabla w_{e,j}\right)  =f_{|\Omega_{e}}\delta_{j,0} &
\text{in }\Omega_{e},\\
w_{i,j|\Gamma}-w_{e,j|\Gamma}=0 & \text{on }\Gamma,\\
\alpha_{i}\partial_{\mathbf{n}}w_{i,j|\Gamma}-\alpha_{e}\partial_{\mathbf{n}%
}w_{e,j|\Gamma}=\dfrac{\left(  \alpha_{e}-\alpha_{\delta}\right)  \left(
\alpha_{i}-\alpha_{\delta}\right)  }{\alpha_{\delta}}\Delta_{\Gamma
}w_{i,j-1|\Gamma} & \text{on}\ \Gamma,\\
w_{e,j|\partial\Omega}=0 & \text{on }\partial\Omega,
\end{array}
\right.
\]
with the convention that $w_{-1}=0.$ A simple calculation shows that the two
first terms $(w_{i,0},w_{e,0})$ and $(w_{i,1},w_{e,1})$ coincide with the two
first terms of (\ref{13}) and (\ref{14}). Furthermore, since $f_{e}$ and
$f_{i}$ are $\mathcal{C}^{\infty},$ each term of (\ref{46}) is bounded in
$H^{1}\left(  \Omega\right)  $. Then, by setting $\mathcal{R}_{w}:=U_{\delta
}^{ap}-w_{0}-\delta w_{1}-\delta^{2}w_{2},$ there exists $c>0$, such as
$\left\Vert \mathcal{R}_{w}\right\Vert _{H^{1}\left(  \Omega\right)  }\leq
c\delta^{2},$ which gives the desired result.
\end{proof}

\end{document}